\documentclass[11pt]{amsart}
\usepackage[margin=1.1in]{geometry}
\usepackage{amsmath,amssymb,amsthm}
\usepackage[colorlinks=true,linkcolor=blue,citecolor=blue,urlcolor=blue]{hyperref}
\usepackage{booktabs}

\newtheorem{theorem}{Theorem}[section]
\newtheorem{lemma}[theorem]{Lemma}
\newtheorem{proposition}[theorem]{Proposition}
\newtheorem{corollary}[theorem]{Corollary}
\theoremstyle{definition}
\newtheorem{definition}[theorem]{Definition}
\newtheorem{example}[theorem]{Example}
\theoremstyle{remark}
\newtheorem{remark}[theorem]{Remark}

\newcommand{\F}{\mathbb{F}}
\newcommand{\Z}{\mathbb{Z}}
\newcommand{\Tr}{\mathrm{Tr}}
\newcommand{\Om}{\Omega}
\newcommand{\Omd}{\widehat{\Omega}}
\newcommand{\eq}{\mathbf{e}_q}
\newcommand{\er}{\mathbf{e}_r}
\newcommand{\ip}[2]{\langle #1,#2\rangle}
\newcommand{\KL}{\mathcal{K}}
\newcommand{\Kz}{K_0}
\newcommand{\Ko}{K_1}
\newcommand{\Kp}{K_0^{\perp}}

\title[Association schemes from vectorial gMM bent functions]{Association schemes from vectorial generalized Maiorana--McFarland bent functions with non-weakly regular components}

\author{Rumi Melih Pelen}
\address{Department of Mathematics and Statistics, University of South Florida, Tampa, FL, USA}
\email{rmpelen@usf.edu}

\date{September 26, 2026}

\begin{document}

\begin{abstract}
Let $p$ be an odd prime, $q=p^k$ and $r=p^s$ with $s\mid k$. We consider functions
$H\colon V_n\times\F_q\times\F_q\to\F_r$ of generalized Maiorana--McFarland type,
$H(x,y,z)=P^{(z)}(x)+\Tr^k_s(yz)$, whose ingredients $P^{(z)}\colon V_n\to\F_r$ are vectorial dual-bent functions with weakly regular components which are constant on the cosets of $\F_r^*$ in $\F_q^*$ and, for $z\ne0$, are vectorial dual-bent with respect to the inversion $t\mapsto t^{-1}$ of $\F_r^*$. For $s=k$ these are the $\F_q$-valued vectorial generalized Maiorana--McFarland functions of \c{C}e\c{s}melio\u{g}lu and Meidl, which can have all their components non-weakly regular. We show that the partition of $V_n\times\F_q\times\F_q$ into the $\F_r$-valued level sets of $H$, cut according to the $\F_r$-line of $z$ and with the subspace $\{0\}\times\F_q\times\{0\}$ cut according to the $\F_r$-line of $y$, is Fourier-reflexive (provided the ingredient $P^{(0)}$ at $z=0$ satisfies $P^{(0)}(0)=0$ and $W_{P^{(0)}_t}(0)=\pm p^{n/2}$ for all $t$), and induces a translation association scheme with $\frac{q-1}{r-1}(r+1)+r$ classes (one less in a degenerate case), which is symmetric when the ingredients are even. For $s=1$ this is (for $l=2$ and $n$ even) the recent construction of Anbar and Kalayc\i\ from $p$-ary generalized Maiorana--McFarland functions, the inversion condition being the $2$-form condition in that case; for $s=k$ one obtains $(2q+1)$-class schemes from vectorial bent functions with non-weakly regular components. Fusions along $\F_r$-subspaces $\Kz$ of $\F_q$, which need a compatibility condition on the Walsh signs, give $(2r+1)$- and $(3r+2)$-class schemes and, when all ingredients are of inversion type, connect the constructions at different levels; for $s=1$, $\Kz=\{0\}$ they are the $(2p+1)$-class schemes of Anbar, Kalayc\i, Meidl and \"Ozbudak. We give examples showing that the conclusion may fail when the inversion condition is omitted, when the ingredients vary within a coset of $\F_r^*$, when the cutting sets are not subspaces, or when the ingredients have non-weakly regular components; thus none of these hypotheses can simply be dropped. To our knowledge, these are the first association schemes obtained from vectorial bent functions with non-weakly regular components.
\end{abstract}

\maketitle

\section{Introduction}

Let $p$ be an odd prime and let $V_n$ be an $n$-dimensional vector space over $\F_p$ equipped with a nondegenerate symmetric bilinear form $\ip{\cdot}{\cdot}$. A function $f\colon V_n\to\F_p$ is \emph{bent} if its Walsh transform
\[
W_f(b)=\sum_{x\in V_n}\zeta_p^{\,f(x)-\ip{b}{x}},\qquad \zeta_p=e^{2\pi i/p},
\]
has absolute value $p^{n/2}$ for all $b\in V_n$. By \cite{KSW} one then has $W_f(b)=\varepsilon_f(b)\,\mu\, p^{n/2}\zeta_p^{f^*(b)}$ with $\varepsilon_f(b)\in\{\pm1\}$, $\mu\in\{1,i\}$ depending only on $p^n\bmod 4$, and a function $f^*\colon V_n\to\F_p$ called the \emph{dual} of $f$. The bent function $f$ is \emph{weakly regular} if $\varepsilon_f$ is constant, \emph{regular} if moreover $\varepsilon_f\mu=1$, and \emph{non-weakly regular} otherwise. The dual of a weakly regular bent function is bent; a non-weakly regular bent function may or may not be \emph{dual-bent} \cite{CMP13}.

A function $F\colon V_n\to V_m$ is \emph{vectorial bent} if all its component functions $F_c=\ip{c}{F}$, $c\ne0$, are bent. Vectorial bent functions with prescribed regularity properties of their components have been studied in \cite{CMP20,CM24} and the references therein. In \cite{CM24} the vectorial versions of the three known constructions of non-weakly regular bent functions are compared, and it is shown that the vectorial generalized Maiorana--McFarland construction yields vectorial bent functions all of whose components are non-weakly regular but dual-bent, as well as vectorial bent functions with both weakly regular and non-weakly regular components.

Bent functions and their level sets are a well-known source of partial difference sets, strongly regular graphs and association schemes. The classical results are for (vectorial) bent functions with weakly regular components \cite{PTFL,TPF,CMP18}; the preimage set partition of a weakly regular $p$-ary bent function which is an $\ell$-form induces a $p$-class association scheme \cite{PTFL,WHL}, and the preimage set partition of a \emph{vectorial dual-bent} function all of whose components are regular, or all weakly regular but not regular, induces a $p^m$-class association scheme \cite{AKMO3}, with extensions to general sign patterns in \cite{WFWY}; see also \cite{AKM,WF24}. For non-weakly regular bent functions the first construction of association schemes was given by \"Ozbudak and the author \cite{OP22} for ternary generalized Maiorana--McFarland functions; the basic idea is to intersect the level sets of the dual $f^*$ with the two sets $B_\pm(f)$ on which the Walsh transform of $f$ takes a fixed sign (equivalently, up to exchanging $f$ and $f^*$, the level sets of $f$ with $B_\pm(f^*)$). This was generalized by Wei, Wang and Fu \cite{WWF} to a large class of dual-bent non-weakly regular bent functions for arbitrary odd $p$, and by Anbar, Kalayc\i, Meidl and \"Ozbudak \cite{AKMO} to the whole class of $p$-ary generalized Maiorana--McFarland functions
\begin{equation}\label{eq:AKMO}
F(x,y,z)=f^{(z)}(x)+\Tr^k_1(yz^{l-1}),\qquad F\colon\F_{p^n}\times\F_{p^k}\times\F_{p^k}\to\F_p,
\end{equation}
which yield $(2p+1)$-class association schemes. In all these constructions the bent functions are $\F_p$-valued. Association schemes from vectorial bent functions with non-weakly regular components have, as far as we know, not been considered so far; whether they exist is asked explicitly at the end of \cite{AKMO3}: ``Can association schemes or similar objects also be obtained from non-ternary non-weakly regular bent functions, or from vectorial bent functions with mixed or non-weakly regular components?'' The present paper answers the second part of this question affirmatively.

In this paper we study functions
\begin{equation}\label{eq:H}
H\colon V_n\times\F_q\times\F_q\to\F_r,\qquad H(x,y,z)=P^{(z)}(x)+\Tr^k_s(yz),\qquad q=p^k,\ r=p^s,\ s\mid k,
\end{equation}
whose ingredients $P^{(z)}\colon V_n\to\F_r$ are vectorial bent functions with weakly regular components. For $s=k$, \eqref{eq:H} is the vectorial generalized Maiorana--McFarland function of \cite[Lemma~2]{CM24}, and for $s=1$ it is \eqref{eq:AKMO} with $l=2$. The $\F_r$-valued function $H$ has $r-1$ components $H_t=\Tr^s_1(tH)$, $t\in\F_r^*$; by \cite[Corollary~3]{CM24}, $H_t$ is non-weakly regular as soon as the components $\Tr^s_1(tP^{(z)})$ have different regularity for different $z$.

Our main result, Theorem~\ref{thm:main}, is the following. Suppose that the ingredients are constant on the cosets of $\F_r^*$ in $\F_q^*$, that they are vectorial dual-bent, and that those with $z\ne0$ are vectorial dual-bent with respect to the inversion of $\F_r^*$ (Definition~\ref{def:inv}). Then the partition of $V_n\times\F_q\times\F_q$ into the $\F_r$-valued level sets of $H$, cut according to the $\F_r$-line of $z$, and with the subspace $\{0\}\times\F_q\times\{0\}$ cut according to the $\F_r$-line of $y$, is Fourier-reflexive. It therefore induces a translation association scheme with $\frac{q-1}{r-1}(r+1)+r$ classes (one less in a degenerate case), symmetric when the ingredients are even, whatever the Walsh signs of the ingredients. Fusing along an $\F_r$-subspace $\Kz$ of $\F_q$ (Theorem~\ref{thm:K0}), which requires the ingredients to have the same sign pattern inside and outside $\Kz$, gives $(2r+1)$-class schemes for $\Kz\in\{\{0\},\F_q\}$ and $(3r+2)$-class schemes otherwise. Three instances deserve to be singled out.
\begin{itemize}
\item $s=1$: $H$ is the $p$-ary function \eqref{eq:AKMO} with $l=2$. Theorem~\ref{thm:main} is then, for $n$ even, the case $l=2$ of the recent theorem of Anbar and Kalayc\i\ \cite{AK}, which gives $\big(\frac{p^k-1}{p-1}(p+1)+p\big)$-class schemes, and Theorem~\ref{thm:K0} with $\Kz=\{0\}$ is the case $l=2$ of \cite[Theorem~1]{AKMO}; the inversion hypothesis is, for $s=1$, the $2$-form hypothesis of these papers (Proposition~\ref{prop:s1}).
\item $s=k$: $H$ is the $\F_q$-valued vectorial generalized Maiorana--McFarland function with two ingredients, all of whose components may be non-weakly regular, and Theorem~\ref{thm:main} gives $(2q+1)$-class schemes; when both ingredients are of inversion type, the schemes of \cite{AKMO} attached to the projections $\Tr^k_1(\gamma H)$, $\gamma\in\F_q^*$, are fusion schemes of these (Theorem~\ref{thm:fusion}).
\item $1<s<k$: the $\F_{p^s}$-valued analogues of the construction of \cite{AK}, which are neither fusions nor fissions of the schemes at other levels in general.
\end{itemize}
In all cases the class numbers stated above are one less in the degenerate case $n=2s$ with all components of the ingredient at $z=0$ weakly regular but not regular.
Along the way we show that, for two-ingredient functions, the inversion hypothesis on both ingredients is equivalent to $H$ being vectorial dual-bent (Proposition~\ref{prop:equiv}), so that in particular \emph{vectorial dual-bent generalized Maiorana--McFarland functions induce association schemes}, irrespective of the regularity of their components; but Theorem~\ref{thm:main} also applies to functions $H$ which are not vectorial dual-bent, since the ingredient on $z=0$ only needs to be vectorial dual-bent with an arbitrary permutation. For the change of level in Theorem~\ref{thm:fusion} the inversion hypothesis on the ingredient at $z=0$ is needed again. Explicit infinite families, built from quadratic and Maiorana--McFarland ingredients, are given in Corollary~\ref{cor:family}.

Compared with the scalar constructions of \cite{AKMO,AK}, the refinement to $\F_r$-valued level sets costs flexibility, and we give examples (Section~\ref{sec:examples}) showing that the conclusion may fail when any of the hypotheses is omitted: with vectorial partial spread ingredients (which are not of inversion type), with ingredients constant only on the smaller cosets of $\F_{p}^*$ (which give schemes at level $1$ but not at level $k$), with cutting sets which are not subspaces, and with ingredients having non-weakly regular components. Thus none of the hypotheses can simply be dropped in general. On the other hand no assumption is needed on the Walsh signs themselves, so $H$ may have only non-weakly regular components, only weakly regular components, or both.

The paper is organized as follows. Section~\ref{sec:prelim} collects the background. Section~\ref{sec:H} determines the Walsh transform and the vectorial dual of $H$. Section~\ref{sec:main} contains the main theorem, Section~\ref{sec:fusion} the fusion schemes obtained by changing the level $s$, and Section~\ref{sec:examples} examples, computational verifications and the counterexamples.

\section{Preliminaries}\label{sec:prelim}

Throughout, $p$ is an odd prime, $q=p^k$, $s\mid k$, $r=p^s$, so $\F_r\subseteq\F_q$, and $\zeta_p=e^{2\pi i/p}$. We write $\eq(u)=\zeta_p^{\Tr^k_1(u)}$ for $u\in\F_q$ and $\er(u)=\zeta_p^{\Tr^s_1(u)}$ for $u\in\F_r$; note that $\eq(tu)=\er(t\Tr^k_s(u))$ for $t\in\F_r$, $u\in\F_q$. For a vector space $V_n$ over $\F_p$ with nondegenerate symmetric bilinear form $\ip{\cdot}{\cdot}$ and $f\colon V_n\to\F_p$ we use $W_f(b)=\sum_{x\in V_n}\zeta_p^{f(x)-\ip{b}{x}}$. We assume throughout that $n$ is even, so that $p^n\equiv1\bmod4$ and
\begin{equation}\label{eq:walsh}
W_f(b)=\varepsilon_f(b)\,p^{n/2}\,\zeta_p^{f^*(b)},\qquad \varepsilon_f(b)\in\{\pm1\},
\end{equation}
for every bent $f$. For $n$ even, the dual of a weakly regular bent function $f$ with sign $\varepsilon$ is weakly regular bent with the same sign: $W_{f^*}(b)=\varepsilon p^{n/2}\zeta_p^{f(-b)}$.

\subsection{Vectorial dual-bent functions}
A function $P\colon V_n\to\F_r$ is vectorial bent if $P_t=\Tr^s_1(tP)$ is bent for every $t\in\F_r^*$; it has \emph{weakly regular components} if every $P_t$ is weakly regular, with sign $\varepsilon_{P,t}$. The vector $(\varepsilon_{P,t})_{t\in\F_r^*}$ is the \emph{sign pattern} of $P$; the components are \emph{of the same type} if the sign pattern is constant. Following \cite{CMP20,WF24}, a vectorial bent $P\colon V_n\to\F_r$ is \emph{vectorial dual-bent} if there are a vectorial bent $P^*\colon V_n\to\F_r$ and a permutation $\sigma$ of $\F_r^*$ with $(P_t)^*=(P^*)_{\sigma(t)}$ for all $t$. The pair $(P^*,\sigma)$ is not unique: for an $\F_p$-linear permutation $A$ of $\F_r$ with adjoint $A^\top$ with respect to the trace form, $\Tr^s_1(A(u)v)=\Tr^s_1(uA^\top(v))$, the pair $(A^\top\circ P^*,A^{-1}\circ\sigma)$ serves as well.

\begin{definition}\label{def:inv}
A vectorial bent function $P\colon V_n\to\F_r$ is \emph{of inversion type} if there is a function $G\colon V_n\to\F_r$ with
\[
(P_t)^*=G_{t^{-1}}=\Tr^s_1(t^{-1}G)\qquad\text{for all } t\in\F_r^*.
\]
If $P$ has weakly regular components, then $G$ is vectorial bent and $P$ is vectorial dual-bent with $P^*=G$, $\sigma(t)=t^{-1}$; by the preceding remark $P$ is of inversion type whenever it is vectorial dual-bent with $\sigma=A\circ\iota$, $\iota(t)=t^{-1}$, for some $\F_p$-linear permutation $A$.
\end{definition}

\begin{example}\label{ex:ingredients}
\begin{enumerate}
\item[(a)] Let $s\mid n$, $V_n=\F_{p^n}$ with $\ip{b}{x}=\Tr^n_1(bx)$, and $P(x)=\Tr^n_s(\lambda x^2)$ with $\lambda\in\F_r^*$. For $t\in\F_r^*$, $P_t(x)=\Tr^n_1(t\lambda x^2)$ is quadratic bent with $W_{P_t}(b)=\eta(t\lambda)(-1)^{n-1}p^{n/2}\zeta_p^{-\Tr^n_1(b^2/(4t\lambda))}$ if $p\equiv1\bmod4$ and $W_{P_t}(b)=\eta(t\lambda)(-1)^{n-1}i^n p^{n/2}\zeta_p^{-\Tr^n_1(b^2/(4t\lambda))}$ if $p\equiv3\bmod4$, $\eta$ the quadratic character of $\F_{p^n}$ \cite[Corollary~3]{HK}. Hence $(P_t)^*=\Tr^s_1(t^{-1}G)$ with $G(b)=-\Tr^n_s(b^2/(4\lambda))$: $P$ is of inversion type with weakly regular components and sign pattern $\varepsilon_{P,t}=\eta(t\lambda)(-1)^{n-1}$, resp.\ $\eta(t\lambda)(-1)^{n-1}i^n$. If $n/s$ is even, then $\eta(t\lambda)=1$ for all $t\in\F_r^*$ and the components are of the same type; if $n/s$ is odd, then $\eta(t\lambda)=\eta_r(t\lambda)$ and both types occur.
\item[(b)] Let $n=2m$, $V_n=\F_{p^m}\times\F_{p^m}$ with $\ip{(a,b)}{(x,y)}=\Tr^m_1(ax+by)$, $s\mid m$, and $L$ an $\F_r$-linear permutation of $\F_{p^m}$. The vectorial Maiorana--McFarland function $P(x,y)=\Tr^m_s(L(x)y)$ has regular components $P_t(x,y)=\Tr^m_1(tL(x)y)$ with dual $(P_t)^*(a,b)=-\Tr^m_1(aL^{-1}(t^{-1}b))=\Tr^s_1(t^{-1}G(a,b))$, $G(a,b)=-\Tr^m_s(aL^{-1}(b))$. Hence $P$ is of inversion type.
\item[(c)] For $s=1$ and $p=3$ every even bent $f\colon V_n\to\F_3$ is of inversion type: $(2f)^*=(-f)^*=-f^*$ because $f^*$ is even. For $p>3$ this is no longer automatic.
\end{enumerate}
\end{example}

In contrast, the vectorial partial spread function $P(x,y)=\Tr^m_s(xy^{-1})$, $P(x,0)=0$, all of whose components are regular, is vectorial dual-bent with $\sigma(t)=t$ \cite[Theorem~2]{AKMO2}, and for $r>3$ it is not of inversion type (Example~\ref{ex:PS}).

For $s=1$ the notions above reduce to the familiar $\ell$-forms. Recall that $f\colon V_n\to\F_p$ is an \emph{$\ell$-form}, $2\le\ell\le p-1$ with $\gcd(\ell-1,p-1)=1$, if $f(cx)=c^{\ell}f(x)$ for all $c\in\F_p^*$; if $f$ is weakly regular bent and an $\ell$-form, then $f^*$ is an $\tilde\ell$-form with $(\ell-1)(\tilde\ell-1)\equiv1\bmod(p-1)$ \cite{AKMO}.

\begin{proposition}\label{prop:s1}
Let $n$ be even and $f\colon V_n\to\F_p$ weakly regular bent. Then $(tf)^*(b)=t\,f^*(t^{-1}b)$ for all $t\in\F_p^*$. Consequently $f$ is vectorial dual-bent (as an $\F_p$-valued function) if and only if $f^*$, equivalently $f$, is an $\ell$-form for some $\ell$; and $f$ is of inversion type if and only if $f$ is a $2$-form.
\end{proposition}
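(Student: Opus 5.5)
The identity $(tf)^*(b)=t\,f^*(t^{-1}b)$ comes from a Galois-conjugation argument. Both characterizations then follow by reducing the dual-bent condition to a multiplicative functional equation for $f^*$.

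\emph{Step 1 (the identity).} For $t\in\F_p^*$ let $\sigma_t$ be the automorphism of $\mathbb{Q}(\zeta_p)$ with $\zeta_p\mapsto\zeta_p^t$. Since $\ip{b}{x}=t\ip{t^{-1}b}{x}$, we have
\[
W_{tf}(b)=\sum_x\zeta_p^{t(f(x)-\ip{t^{-1}b}{x})}=\sigma_t\bigl(W_f(t^{-1}b)\bigr).
\]
Because $n$ is even, $p^{n/2}\in\Z$. By \eqref{eq:walsh}, $W_f(t^{-1}b)=\varepsilon\,p^{n/2}\zeta_p^{f^*(t^{-1}b)}$, so applying $\sigma_t$ gives $W_{tf}(b)=\varepsilon\,p^{n/2}\zeta_p^{t f^*(t^{-1}b)}$. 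This proves $(tf)^*(b)=t f^*(t^{-1}b)$. It also shows that $tf$ is weakly regular with the same sign as $f$.

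\emph{Step 2 (reduction to a functional equation).} View $f$ as an $\F_p$-valued vectorial function, so that $P_t=tf$. Vectorial dual-bentness asks for $G$ and a permutation $\sigma$ of $\F_p^*$ with $t f^*(t^{-1}b)=\sigma(t)G(b)$ for all $t,b$.
\begin{enumerate}
\item[(i)] Putting $t=1$ gives $G=\sigma(1)^{-1}f^*$. Hence $f^*(ub)=c(u)f^*(b)$ for all $u\in\F_p^*$, where $c(u)=u\,\sigma(u^{-1})/\sigma(1)\in\F_p^*$.
\item[(ii)] Since $f^*$ is bent, it is not identically zero. Pick $b_0$ with $f^*(b_0)\neq0$; then $c(uv)=c(u)c(v)$. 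So $c$ is an endomorphism of the cyclic group $\F_p^*$, hence $c(u)=u^{\ell'}$ for some $\ell'$.
\item[(iii)] Substituting back gives $\sigma(t)=\sigma(1)\,t^{1-\ell'}$. This is a permutation of $\F_p^*$ exactly when $\gcd(\ell'-1,p-1)=1$. That condition rules out $\ell'\equiv1$, so we may take $2\le\ell'\le p-1$.
\end{enumerate}
Thus $f^*$ is an $\ell'$-form. Conversely, if $f^*$ is an $\ell'$-form, then $\sigma(t)=t^{1-\ell'}$ and $G=f^*$ satisfy the requirement.

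\emph{Step 3 (transfer between $f$ and $f^*$).} The dual $f^*$ is weakly regular, with $f^{**}(b)=f(-b)$. Applying the cited $\ell$-form result from \cite{AKMO} to $f$ and to $f^*$ shows that $f$ is an $\ell$-form for some admissible $\ell$ iff $f^*$ is an $\tilde\ell$-form, where $(\ell-1)(\tilde\ell-1)\equiv1\bmod(p-1)$.

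\emph{Step 4 (inversion type).} Inversion type requires $tf^*(t^{-1}b)=t^{-1}G(b)$ for all $t,b$. Putting $t=1$ forces $G=f^*$, and then the condition reads $f^*(ub)=u^{2}f^*(b)$ for all $u\in\F_p^*$. So $f$ is of inversion type iff $f^*$ is a $2$-form. By the relation in Step 3, $(\ell-1)(\tilde\ell-1)\equiv1$ with $\tilde\ell=2$ gives $\ell\equiv2$, so $f^*$ is a $2$-form iff $f$ is.

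The only step needing care is Step 2(ii), where one must use that $f^*$ is not identically zero and handle the exponent bookkeeping modulo $p-1$.
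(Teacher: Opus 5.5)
Your proposal is correct and follows essentially the same route as the paper. Both use Galois conjugation by $\sigma_t$ for the identity, then a multiplicative functional equation for $f^*$ whose multiplier is a character of $\F_p^*$, the permutation condition forcing $\gcd(\tilde\ell-1,p-1)=1$, transfer between $f$ and $f^*$ via $f^{**}(x)=f(-x)$, and the inversion case as $\tilde\ell=2$. Your explicit normalization $G=\sigma(1)^{-1}f^*$, obtained by setting $t=1$, makes precise a step the paper handles implicitly by writing $(tf)^*=\sigma(t)f^*$ directly.
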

\begin{proof}
Let $\sigma_t$ be the automorphism of $\mathbb Q(\zeta_p)$ with $\sigma_t(\zeta_p)=\zeta_p^t$. Then $W_{tf}(b)=\sum_x\zeta_p^{t(f(x)-\ip{t^{-1}b}{x})}=\sigma_t\big(W_f(t^{-1}b)\big)=\varepsilon p^{n/2}\zeta_p^{tf^*(t^{-1}b)}$, as $\varepsilon p^{n/2}\in\mathbb Q$. If $f^*$ is an $\tilde\ell$-form then $(tf)^*=t^{1-\tilde\ell}f^*$, so $f$ is vectorial dual-bent with $\sigma(t)=t^{1-\tilde\ell}$; conversely $(tf)^*=\sigma(t)f^*$ gives $f^*(t^{-1}b)=t^{-1}\sigma(t)f^*(b)$, so $\chi(c):=c\,\sigma(c^{-1})$ satisfies $f^*(cb)=\chi(c)f^*(b)$ for all $b$; evaluating at some $b$ with $f^*(b)\ne0$ shows that $\chi$ is a character of $\F_p^*$, $\chi(c)=c^{\tilde\ell}$ for some $\tilde\ell$, so $f^*$ is an $\tilde\ell$-form, and $\sigma(t)=t^{1-\tilde\ell}$ being a permutation forces $\gcd(\tilde\ell-1,p-1)=1$. Since $f(x)=f^{**}(-x)$, $f$ is then an $\ell$-form with $(\ell-1)(\tilde\ell-1)\equiv1\bmod(p-1)$. The last statement is the case $\sigma=\iota$, i.e.\ $\tilde\ell=2$, which is equivalent to $\ell=2$.
\end{proof}

The first equivalence is \cite[Subsection~2.1]{AKMO}, see also \cite[Theorem~5]{WF24} and \cite[Remark~6]{AKMO3}. For $s>1$ the Galois argument is not available, and the inversion condition is a genuinely vectorial one; cf.\ the remark in \cite[Section~4]{AKMO3} that for $m>1$ vectorial dual-bentness is not equivalent to being a vectorial $\ell$-form.

\subsection{Association schemes from partitions}
Let $V$ be a finite dimensional $\F_p$-space with a nondegenerate symmetric bilinear form, and for $\xi\in V$, $S\subseteq V$ let $\chi_\xi(S)=\sum_{s\in S}\zeta_p^{-\ip{\xi}{s}}$. (The convention with $+\ip{\xi}{s}$ differs by the bijection $\xi\mapsto-\xi$, which changes neither the dual partition up to relabelling nor Fourier-reflexivity.) For a partition $\Om=\{U_0=\{0\},U_1,\dots,U_d\}$ of $V$, the \emph{dual partition} $\Omd$ is the partition of $V$ into the level sets of $\xi\mapsto(\chi_\xi(U_0),\dots,\chi_\xi(U_d))$, and $\Om$ is \emph{Fourier-reflexive} if $|\Omd|=|\Om|$ \cite{GL,BCN}; see \cite[Lemma~1]{AKMO}.

\begin{lemma}\label{lem:reflexive}
If $\Om=\{\{0\},U_1,\dots,U_d\}$ is Fourier-reflexive, then $R_0=\{(x,x)\}$, $R_i=\{(x,y): y-x\in U_i\}$ form a $d$-class (translation) association scheme on $V$, which is symmetric if $U_i=-U_i$ for all $i$.
\end{lemma}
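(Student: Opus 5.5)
The plan is to work in the group algebra $\mathbb{C}[V]$ and identify the span $\mathcal{A}$ of the indicator functions $1_{U_0},\dots,1_{U_d}$ as a commutative algebra under convolution. Closure of $\mathcal{A}$ under convolution is exactly what makes the relations $R_i$ an association scheme: the number of $z$ with $z-x\in U_i$ and $y-z\in U_j$ equals $(1_{U_i}*1_{U_j})(y-x)$. That number depends only on the class of $y-x$ precisely when $1_{U_i}*1_{U_j}\in\mathcal{A}$.

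First I would record what the Fourier transform does here. Let $\widehat{\phantom{x}}$ denote $\phi\mapsto(\xi\mapsto\sum_s\phi(s)\zeta_p^{-\ip{\xi}{s}})$. Then $\widehat{1_{U_i}}(\xi)=\chi_\xi(U_i)$, and $\widehat{\phi*\psi}=\widehat\phi\,\widehat\psi$ pointwise. By definition of $\Omd$, every $\widehat{1_{U_i}}$ is constant on each block $\widehat U_j$ of $\Omd$, so $\widehat{\mathcal{A}}\subseteq\widehat{\mathcal{B}}$, where $\widehat{\mathcal B}$ is the span of the indicators of the blocks of $\Omd$. The space $\widehat{\mathcal B}$ is closed under pointwise products.

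The key step, and the main obstacle, is to show that $\widehat{\mathcal A}=\widehat{\mathcal B}$. Since the Fourier transform is injective and the $1_{U_i}$ are linearly independent, $\dim\widehat{\mathcal A}=d+1=|\Om|$, and $\dim\widehat{\mathcal B}=|\Omd|$. Fourier-reflexivity gives equality of these dimensions, hence equality of the spaces. Therefore $\widehat{\mathcal A}$ is closed under pointwise multiplication, and so $\mathcal A$ is closed under convolution. The remaining hypotheses follow directly: $\mathcal A$ contains $1_{\{0\}}$, which is the identity, and the constant function $1_V=\sum_i 1_{U_i}$; the relations partition $V\times V$ because $\Om$ partitions $V$; and commutativity is inherited from the abelian group.

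For the transpose axiom I need $-U_i$ to be a block for every $i$. Note that $\chi_\xi(-U_i)=\overline{\chi_\xi(U_i)}$. So $\widehat{1_{-U_i}}$ is again constant on the blocks of $\Omd$, which puts it in $\widehat{\mathcal B}=\widehat{\mathcal A}$, and hence $1_{-U_i}\in\mathcal A$. A $0/1$-valued function in $\mathcal A$ is a union of blocks. Since $|-U_i|=|U_i|$, I would argue minimality: if $-U_i$ were a union of two or more blocks, then applying negation again would write $U_i$ as a union of two or more blocks, which is impossible. So each $-U_i$ is exactly one block $U_{i'}$, and $R_i^\top=R_{i'}$.

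The symmetric case, where $U_i=-U_i$ for all $i$, is then immediate, since $R_i^\top=R_i$. The translation property holds because the $R_i$ are invariant under $(x,y)\mapsto(x+v,y+v)$ by construction.
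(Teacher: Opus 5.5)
Your proof is correct. The paper gives no proof of its own: it quotes the lemma from \cite{GL,BCN} (see also \cite[Lemma~1]{AKMO}), and your argument is the standard duality argument behind those references.

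\begin{itemize}
\item The inclusion $\widehat{\mathcal A}\subseteq\widehat{\mathcal B}$ always holds, so $|\Om|\le|\Omd|$.
\item Equality forces $\widehat{\mathcal A}$ to be the whole algebra of functions constant on the blocks of $\Omd$. That algebra is closed under pointwise products.
\item Hence $\mathcal A$ is closed under convolution, i.e.\ it is the Bose--Mesner algebra (Schur ring) of the translation scheme.
\end{itemize}

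Your handling of the transpose axiom is also sound: you use $\chi_\xi(-U_i)=\overline{\chi_\xi(U_i)}$ to get $1_{-U_i}\in\mathcal A$, and then the minimality argument shows $-U_i$ is a single cell. The remark $|-U_i|=|U_i|$ is not needed there.

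Two facts you use implicitly both hold here:
\begin{itemize}
\item Injectivity of the Fourier transform, which relies on $\ip{\cdot}{\cdot}$ being nondegenerate.
\item Nonemptiness of the cells, which gives linear independence of the indicators and the fact that a $0/1$-valued element of $\mathcal A$ is a union of cells.
\end{itemize}
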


\begin{lemma}\label{lem:injective}
For $Y,i\in\F_r$, $\varepsilon\colon\F_r^*\to\{\pm1\}$ and a permutation $\pi$ of $\F_r^*$ let $\KL^{\pi}_\varepsilon(Y,i)=\sum_{t\in\F_r^*}\varepsilon(t)\,\er(\pi(t)Y-ti)$, and write $\KL_\varepsilon=\KL^{\iota}_\varepsilon$ for $\pi=\iota\colon t\mapsto t^{-1}$. If $\KL^\pi_\varepsilon(Y,i)=\KL^\pi_\varepsilon(Y',i)$ for all $i\in\F_r$, then $Y=Y'$.
\end{lemma}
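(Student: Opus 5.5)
We will read $\KL^\pi_\varepsilon(Y,\cdot)$ as the additive Fourier transform on $\F_r$ of a function on $\F_r$, and then use Fourier inversion together with the nondegeneracy of the trace form.

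First we fix $Y\in\F_r$ and define $g_Y\colon\F_r\to\mathbb C$ by $g_Y(0)=0$ and $g_Y(t)=\varepsilon(t)\,\er(\pi(t)Y)$ for $t\in\F_r^*$. By definition,
\[
\KL^\pi_\varepsilon(Y,i)=\sum_{t\in\F_r} g_Y(t)\,\er(-ti),\qquad i\in\F_r .
\]
This is the Fourier transform of $g_Y$ with respect to the additive characters $t\mapsto\er(-ti)$ of $\F_r$. These characters are pairwise distinct because the form $(t,i)\mapsto\Tr^s_1(ti)$ is nondegenerate. So orthogonality gives
\[
g_Y(t)=\frac1r\sum_{i\in\F_r}\KL^\pi_\varepsilon(Y,i)\,\er(ti).
\]

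Next, the hypothesis $\KL^\pi_\varepsilon(Y,i)=\KL^\pi_\varepsilon(Y',i)$ for all $i$ then forces $g_Y=g_{Y'}$ pointwise. For $t\in\F_r^*$ we have $\varepsilon(t)\ne0$, so dividing by it gives $\er(\pi(t)(Y-Y'))=1$ for all $t\in\F_r^*$. Since $\pi$ is a permutation of $\F_r^*$, this means $\Tr^s_1(u(Y-Y'))=0$ for every $u\in\F_r^*$, and trivially also for $u=0$. By nondegeneracy of the trace form on $\F_r$ we conclude $Y-Y'=0$.

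There is no real obstacle here. The only points to check are that $g_Y$ must be extended by $0$ at $t=0$, so that the sum runs over the full group $\F_r$ and Fourier inversion applies, and that the values $\varepsilon(t)=\pm1$ are invertible. Neither the specific choice $\pi=\iota$ nor any property of $\varepsilon$ beyond being nonzero is used.
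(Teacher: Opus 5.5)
Your proposal is correct and is essentially the paper's proof: both view $i\mapsto\KL^\pi_\varepsilon(Y,i)$ as the Fourier transform on $\F_r$ of $t\mapsto\varepsilon(t)\er(\pi(t)Y)$ extended by $0$ at $t=0$, and use Fourier inversion. From the resulting identity $\er(\pi(t)(Y-Y'))=1$ for all $t\in\F_r^*$, both then conclude $Y=Y'$ by letting $\pi(t)$ run through $\F_r^*$ and using nondegeneracy of the trace form.
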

\begin{proof}
$i\mapsto\KL^\pi_\varepsilon(Y,i)$ is the Fourier transform on $\F_r$ of $\varphi_Y(t)=\varepsilon(t)\er(\pi(t)Y)$ ($t\ne0$), $\varphi_Y(0)=0$. Equal transforms give $\varphi_Y=\varphi_{Y'}$, i.e.\ $\er(\pi(t)(Y-Y'))=1$ for all $t\in\F_r^*$; as $\pi(t)$ runs through $\F_r^*$, $Y=Y'$.
\end{proof}

\section{The function $H$ and its vectorial dual}\label{sec:H}

Fix an $\F_r$-subspace $\Kz$ of $\F_q$, put $\Ko=\F_q\setminus\Kz$ and
\[
\Kp=\{y\in\F_q:\Tr^k_1(zy)=0\text{ for all }z\in\Kz\},
\]
an $\F_r$-subspace with $|\Kz||\Kp|=q$ and $(\Kp)^\perp=\Kz$. Note that $\Kz$ and $\Ko$ are stable under multiplication by $\F_r^*$. Let $(P^{(z)})_{z\in\F_q}$ be a family of functions $V_n\to\F_r$ satisfying
\begin{enumerate}
\item[(F1)] $P^{(cz)}=P^{(z)}$ for all $z\in\F_q$ and $c\in\F_r^*$;
\item[(F2)] every $P^{(z)}$ is vectorial bent with weakly regular components, with sign pattern $\varepsilon_{z,t}$; for $z\ne0$, $P^{(z)}$ is of inversion type, $(P^{(z)}_t)^*=\Tr^s_1(t^{-1}G^{(z)})$; and $P^{(0)}$ is vectorial dual-bent, $(P^{(0)}_t)^*=\Tr^s_1(\sigma_0(t)G^{(0)})$ for some permutation $\sigma_0$ of $\F_r^*$ (in Lemma~\ref{lem:walsh} and Proposition~\ref{prop:equiv} we additionally assume $\sigma_0=\iota$ where stated);
\item[(F3)] (with respect to $\Kz$) $\varepsilon_{z,t}=\varepsilon_{1,t}$ for all $z\in\Ko$, and $\varepsilon_{z,t}=\varepsilon_{2,t}$ for all $z\in\Kz\setminus\{0\}$, where $\varepsilon_{1,\cdot},\varepsilon_{2,\cdot}$ are fixed sign patterns (the second condition is void if $\Kz=\{0\}$, the first if $\Kz=\F_q$); this condition is used only in Theorem~\ref{thm:K0};
\item[(F4)] $P^{(0)}(0)=0$ and $G^{(0)}(0)=0$, i.e.\ $W_{P^{(0)}_t}(0)=\varepsilon_{0,t}p^{n/2}$ for all $t$.
\end{enumerate}
By (F1), also $G^{(cz)}=G^{(z)}$ and $\varepsilon_{cz,t}=\varepsilon_{z,t}$ for $c\in\F_r^*$. The asymmetry between $z=0$ and $z\ne0$ in (F2) is genuine: the inversion is forced by the term $\Tr^k_s(yz)$, which is absent on $z=0$; see Lemma~\ref{lem:walsh}, Theorem~\ref{thm:main} and Example~\ref{ex:PS}. Define $H\colon V=V_n\times\F_q\times\F_q\to\F_r$ by \eqref{eq:H}, equip $V$ with $\ip{(u,v,w)}{(x,y,z)}=\ip{u}{x}+\Tr^k_1(vy)+\Tr^k_1(wz)$, and put $Q=p^n$. The simplest case, to which the reader may restrict on first reading, is that of two or three ingredients: $P^{(z)}=P^{(0)}$ for $z=0$, $P^{(z)}=P^{(2)}$ for $z\in\Kz\setminus\{0\}$, $P^{(z)}=P^{(1)}$ for $z\in\Ko$; then (F1) and (F3) are automatic.

\begin{lemma}\label{lem:walsh}
Suppose that also $P^{(0)}$ is of inversion type ($\sigma_0=\iota$). For $t\in\F_r^*$ and $(u,v,w)\in V$,
\[
W_{H_t}(u,v,w)=q\,\eq(-t^{-1}vw)\,W_{P^{(v)}_t}(u)=\varepsilon_{v,t}\,(Qq^2)^{1/2}\,\er\!\big(t^{-1}G^{(v)}(u)-t^{-1}\Tr^k_s(vw)\big).
\]
In particular $H$ is vectorial bent, the Walsh transform of $H_t$ has sign $\varepsilon_{v,t}$ at $(u,v,w)$, which depends on $v$ only through the coset $v\F_r^*$ (so, under (F3), it is $\varepsilon_{0,t}$ on $\{v=0\}$, $\varepsilon_{2,t}$ on $\{v\in\Kz\setminus\{0\}\}$ and $\varepsilon_{1,t}$ on $\{v\in\Ko\}$), and
\begin{equation}\label{eq:dualH}
(H_t)^*=\Tr^s_1(t^{-1}K),\qquad K(u,v,w)=G^{(v)}(u)-\Tr^k_s(vw).
\end{equation}
Thus $H$ is vectorial dual-bent of inversion type with vectorial dual $K$, which is again of the form \eqref{eq:H} with ingredients $G^{(z)}$ (and $-\Tr^k_s(yz)$ in place of $\Tr^k_s(yz)$).
\end{lemma}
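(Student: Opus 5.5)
Direct computation: expand the Walsh sum, sum over $y$ first, and read off the dual from the weakly regular Walsh transforms of the ingredients. For $t\in\F_r^*$ we have $H_t(x,y,z)=\Tr^s_1(tP^{(z)}(x))+\Tr^k_1(tyz)$, using $\Tr^s_1(t\Tr^k_s(yz))=\Tr^k_1(tyz)$ for $t\in\F_r$. Hence
\[
W_{H_t}(u,v,w)=\sum_{z\in\F_q}\eq(-wz)\sum_{x\in V_n}\zeta_p^{P^{(z)}_t(x)-\ip{u}{x}}\sum_{y\in\F_q}\eq\big(y(tz-v)\big).
\]
The inner sum over $y$ equals $q$ if $z=t^{-1}v$ and $0$ otherwise. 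So only $z=t^{-1}v$ survives, giving
\[
W_{H_t}(u,v,w)=q\,\eq(-t^{-1}vw)\,W_{P^{(t^{-1}v)}_t}(u).
\]
By (F1), $P^{(t^{-1}v)}=P^{(v)}$, since $t^{-1}\in\F_r^*$; for $v=0$ this is trivial. This proves the first equality.

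For the second equality, insert the weakly regular form \eqref{eq:walsh}. By (F2) and the extra hypothesis $\sigma_0=\iota$, every ingredient, including $z=0$, is of inversion type, so $W_{P^{(v)}_t}(u)=\varepsilon_{v,t}\,p^{n/2}\,\er\big(t^{-1}G^{(v)}(u)\big)$. Also $\eq(-t^{-1}vw)=\er\big(-t^{-1}\Tr^k_s(vw)\big)$, because $t^{-1}\in\F_r$ (the identity $\eq(tu)=\er(t\Tr^k_s(u))$ from the preliminaries). Since $q\,p^{n/2}=(Qq^2)^{1/2}$, this gives the displayed formula. Its absolute value is constant, so each $H_t$ is bent and $H$ is vectorial bent. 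The sign is $\varepsilon_{v,t}$, and by (F1) it depends only on $v\F_r^*$; the listed values under (F3) are just (F3) restated. Comparing with \eqref{eq:walsh} for the bent function $H_t$ on a space of even $\F_p$-dimension $n+2k$ gives
\[
(H_t)^*(u,v,w)=\Tr^s_1\big(t^{-1}(G^{(v)}(u)-\Tr^k_s(vw))\big)=\Tr^s_1\big(t^{-1}K(u,v,w)\big),
\]
which is \eqref{eq:dualH}.

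It remains to see that $K$ is vectorial bent, which makes $H$ vectorial dual-bent of inversion type with dual $K$. Here $K$ has the shape \eqref{eq:H}, with ingredients $G^{(v)}$ in variable $u$, linear term $-\Tr^k_s(vw)$, and roles $(x,y,z)\leftrightarrow(u,w,v)$. Each $G^{(v)}$ is vectorial bent with weakly regular components, because $\Tr^s_1(t^{-1}G^{(v)})=(P^{(v)}_t)^*$ is weakly regular bent (duals of weakly regular bent functions are weakly regular bent for $n$ even). The family $G^{(v)}$ still satisfies (F1). Rerunning the $y$-summation argument above for $K$ (with $-t$ in place of $t$ in the character sum) gives $|W_{K_t}|=(Qq^2)^{1/2}$, so $K$ is vectorial bent; this needs only the bentness of the components of $G^{(v)}$, not their inversion type.

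The only delicate point is bookkeeping: the collapse $z=t^{-1}v$ has to be reconciled with (F1) and with the inversion-type hypothesis at $z=0$, and the conversion between $\eq$ and $\er$ must be applied correctly. Both are routine, so no real obstacle is expected.
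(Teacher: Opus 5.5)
Your proposal is correct and is essentially the paper's proof. You sum over $y$ first to collapse onto $z=t^{-1}v$, use (F1) to replace $P^{(t^{-1}v)}$ by $P^{(v)}$, insert \eqref{eq:walsh} with the inversion-type duals, and convert $\eq$ to $\er$. Your extra check that $K$ is vectorial bent makes explicit a step the paper leaves under ``the rest follows.'' You do it by the analogous collapse in $w$, using that $\Tr^s_1(tG^{(v)})=(P^{(v)}_{t^{-1}})^*$ is weakly regular bent. The check is genuinely needed, since $H_t$ itself may be non-weakly regular.
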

\begin{proof}
Summing over $y$ first, $\sum_y\eq((tz-v)y)=q$ if $z=t^{-1}v$ and $0$ otherwise, so
\[
W_{H_t}(u,v,w)=q\sum_x\zeta_p^{P^{(t^{-1}v)}_t(x)-\ip{u}{x}}\eq(-t^{-1}vw)=q\,\eq(-t^{-1}vw)W_{P^{(v)}_t}(u),
\]
using $P^{(t^{-1}v)}=P^{(v)}$ by (F1). The rest follows from \eqref{eq:walsh}, (F2) and $\eq(t^{-1}vw)=\er(t^{-1}\Tr^k_s(vw))$.
\end{proof}

\begin{corollary}\label{cor:nwr}
The component $H_t$ is weakly regular if $\varepsilon_{z,t}$ is the same for all $z\in\F_q$, and non-weakly regular otherwise. For every $t$, the Walsh transform of $H_t$ has constant sign on each of the blocks $V_n\times\{0\}\times\F_q$ and $V_n\times(L\setminus\{0\})\times\F_q$, $L$ an $\F_r$-line of $\F_q$; the block structure does not depend on $t$, while the signs attached to the blocks may.
\end{corollary}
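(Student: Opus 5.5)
The plan is to read off the signs directly from the first equality in Lemma~\ref{lem:walsh}, which does not need the extra hypothesis $\sigma_0=\iota$. In the proof of that lemma the sum over $y$ uses only (F1), namely $P^{(t^{-1}v)}=P^{(v)}$. So for arbitrary ingredients satisfying (F1) and (F2), and every $t\in\F_r^*$ and $(u,v,w)\in V$, I would first record
\[
W_{H_t}(u,v,w)=q\,\eq(-t^{-1}vw)\,W_{P^{(v)}_t}(u).
\]
Since $P^{(v)}_t$ is weakly regular bent with sign $\varepsilon_{v,t}$, \eqref{eq:walsh} gives $W_{P^{(v)}_t}(u)=\varepsilon_{v,t}p^{n/2}\zeta_p^{(P^{(v)}_t)^*(u)}$. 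Hence
\[
W_{H_t}(u,v,w)=\varepsilon_{v,t}\,(Qq^2)^{1/2}\,\zeta_p^{\,(P^{(v)}_t)^*(u)-\Tr^k_1(t^{-1}vw)}.
\]
Because $q=p^k$ and $Q=p^n$ with $n$ even, the dimension of $V$ is $n+2k$, which is even. So this is exactly the normal form \eqref{eq:walsh} for the bent function $H_t$ on $V$, and the sign of $H_t$ at $(u,v,w)$ is $\varepsilon_H(u,v,w)=\varepsilon_{v,t}$. This sign is uniquely determined because $\varepsilon p^{(n+2k)/2}\zeta_p^{j}$ determines both $\varepsilon$ and $j$.

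For the first assertion, note that the map $(u,v,w)\mapsto v$ is onto $\F_q$. Therefore the set of signs taken by $W_{H_t}$ is exactly $\{\varepsilon_{z,t}:z\in\F_q\}$. This set is a single value precisely when $\varepsilon_{z,t}$ is independent of $z$, which is the definition of $H_t$ being weakly regular; otherwise $H_t$ is non-weakly regular.

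For the block statement, I would use the consequence of (F1) noted after (F4): $\varepsilon_{cz,t}=\varepsilon_{z,t}$ for $c\in\F_r^*$. If $L=\F_r v_0$ is an $\F_r$-line, then $L\setminus\{0\}=v_0\F_r^*$ is a single coset. Hence $\varepsilon_{v,t}$ is constant for $v\in L\setminus\{0\}$, and trivially constant for $v=0$. So the sign is constant on $V_n\times\{0\}\times\F_q$ and on each $V_n\times(L\setminus\{0\})\times\F_q$. These blocks are defined without reference to $t$, while the attached value $\varepsilon_{v_0,t}$ may vary with $t$.

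I expect no real obstacle. The only point needing care is that Lemma~\ref{lem:walsh} is stated under $\sigma_0=\iota$, so I will re-derive its first equality and only then invoke the weak regularity of each $P^{(v)}_t$, rather than citing the lemma's final formula.
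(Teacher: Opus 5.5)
Your proposal is correct and follows the paper's route: the corollary is read off from the first equality of Lemma~\ref{lem:walsh}, combined with \eqref{eq:walsh} for the weakly regular components of $P^{(v)}$ and with (F1) for constancy on the cosets $L\setminus\{0\}$. Re-deriving that equality without $\sigma_0=\iota$ matches the paper's own remark after the corollary that the sign behaviour is unchanged when $P^{(0)}$ is not of inversion type.
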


If $P^{(0)}$ is not of inversion type, Lemma~\ref{lem:walsh} remains valid on $\{v\ne0\}$, while on $\{v=0\}$ one has $W_{H_t}(u,0,w)=q\,W_{P^{(0)}_t}(u)=\varepsilon_{0,t}(Qq^2)^{1/2}\er(\sigma_0(t)G^{(0)}(u))$; in particular $H$ is still vectorial bent with the same sign behaviour, but in the two- or three-ingredient case $H$ is then not vectorial dual-bent, by the following proposition. (Recall that the pair $(P^{*},\sigma)$ is not unique; ``not of inversion type'' means that no choice of the pair has $\sigma=\iota$.) For two or three ingredients the inversion hypothesis is not only sufficient but necessary for $H$ to be vectorial dual-bent.

\begin{proposition}\label{prop:equiv}
Suppose $P^{(z)}$ is constant on each of $\{0\}$, $\Kz\setminus\{0\}$, $\Ko$, with values $P^{(0)},P^{(2)},P^{(1)}$, all vectorial bent with weakly regular components. Then $H$ is vectorial dual-bent if and only if those of $P^{(0)},P^{(2)},P^{(1)}$ which occur (i.e.\ whose region $\{0\}$, $\Kz\setminus\{0\}$, $\Ko$ is nonempty) are of inversion type. In that case every pair $(H^*,\sigma)$ as in the definition of vectorial dual-bentness is $\big((A^\top)^{-1}\circ K,\ A\circ\iota\big)$, equivalently $(A^\top\circ K,\ A^{-1}\circ\iota)$, with $K$ as in \eqref{eq:dualH} and $A$ an $\F_p$-linear permutation of $\F_r$.
\end{proposition}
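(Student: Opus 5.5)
For the ``if'' direction I would apply Lemma~\ref{lem:walsh}. With two or three ingredients, (F1) holds automatically. If all occurring ingredients are of inversion type, then (F2) holds with $\sigma_0=\iota$. The lemma then gives $(H_t)^*=\Tr^s_1(t^{-1}K)$, so $H$ is vectorial dual-bent with the pair $(K,\iota)$. The remark before Definition~\ref{def:inv} then shows that every pair $\big((A^\top)^{-1}\circ K,A\circ\iota\big)$ also works.

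For the converse, suppose $(H_t)^*=\Tr^s_1(\sigma(t)H^*)$ for some vectorial bent $H^*$ and permutation $\sigma$ of $\F_r^*$. First I would redo the computation in the proof of Lemma~\ref{lem:walsh} without any inversion assumption. Summing over $y$ still yields $W_{H_t}(u,v,w)=q\,\eq(-t^{-1}vw)\,W_{P^{(v)}_t}(u)$. Hence
\[
(H_t)^*(u,v,w)=(P^{(v)}_t)^*(u)-\Tr^k_1(t^{-1}vw).
\]
Now fix $u$ and some $v\neq0$; such a $v$ exists because $q>1$. Set $D(w)=H^*(u,v,w)-H^*(u,v,0)\in\F_r$. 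Subtracting the identity at $w$ and at $w=0$ gives
\[
\Tr^s_1\big(\sigma(t)D(w)\big)=-\Tr^s_1\big(t^{-1}c\big),\qquad c=\Tr^k_s(vw),\quad\text{for all }t\in\F_r^*.
\]

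The key step, and the one I expect to need care, is to extract a linear map from this relation. Since $\sigma(t)$ runs through all of $\F_r^*$, the values $\Tr^s_1(\sigma(t)D)$ for all $t$ determine $D$, by nondegeneracy of the trace form. So $D(w)$ depends only on $c$, and I write $D(w)=B(c)$. As $w$ ranges over $\F_q$, $c=\Tr^k_s(vw)$ ranges over all of $\F_r$, so $B$ is defined on all of $\F_r$. The right-hand side is $\F_p$-linear in $c$ for each $t$, and the trace form is nondegenerate. It follows that $B$ is $\F_p$-linear, and it is injective, hence a permutation of $\F_r$. Passing to the adjoint, $\Tr^s_1(B^\top(\sigma(t))\,c)=-\Tr^s_1(t^{-1}c)$ for all $c$. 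This gives $B^\top\circ\sigma=-\iota$, so $\sigma=A\circ\iota$ with $A=-(B^\top)^{-1}$, which is $\F_p$-linear.

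Substituting $w=0$ then gives, for each occurring region $\{0\}$, $\Kz\setminus\{0\}$, $\Ko$,
\[
(P^{(v)}_t)^*(u)=\Tr^s_1\big(A(t^{-1})H^*(u,v,0)\big)=\Tr^s_1\big(t^{-1}A^\top H^*(u,v,0)\big).
\]
This holds also for $v=0$, where the $w$-term is absent. So each occurring ingredient is of inversion type with $G=A^\top H^*(\cdot,v,0)$. Finally, with $\sigma=A\circ\iota$ established, Lemma~\ref{lem:walsh} gives $\Tr^s_1(t^{-1}A^\top H^*)=\Tr^s_1(t^{-1}K)$ for all $t$. Nondegeneracy then forces $A^\top\circ H^*=K$, that is, $H^*=(A^\top)^{-1}\circ K$. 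This is the claimed description of all pairs; the second form $(A^\top\circ K,\,A^{-1}\circ\iota)$ is the same family with $A$ replaced by $A^{-1}$.
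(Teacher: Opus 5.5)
Your proposal is correct and follows the paper's argument essentially step by step: the Walsh computation redone without the inversion hypothesis, the difference $D$ between the values at $(u,v,w)$ and $(u,v,0)$ shown to be an injective $\F_p$-linear function of $c=\Tr^k_s(vw)$, the adjoint giving $\sigma=A\circ\iota$, then restriction to $w=0$ and comparison with $K$ via Lemma~\ref{lem:walsh}. The differences are only cosmetic. Your sign convention for $D$ produces $A=-(B^\top)^{-1}$ instead of $A=B^{-1}$. You also pick one representative $v$ in each region, where the paper remarks that $A^\top K'(u,v,0)$ is independent of $v$ within a region.
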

\begin{proof}
``If'' is Lemma~\ref{lem:walsh}. Conversely let $(H_t)^*=\Tr^s_1(\sigma(t)K')$. By the proof of Lemma~\ref{lem:walsh} (which does not use (F2)), $(H_t)^*(u,v,w)=(P^{(v)}_t)^*(u)-\Tr^s_1(t^{-1}\Tr^k_s(vw))$. Comparing at $(u,v,w)$ and $(u,v,0)$ gives, for $v\ne0$,
\[
\Tr^s_1\big(\sigma(t)D\big)=\Tr^s_1(t^{-1}c),\qquad D=K'(u,v,0)-K'(u,v,w),\ c=\Tr^k_s(vw),
\]
for all $t\in\F_r^*$. As $\sigma(t)$ runs through $\F_r^*$ and the trace form is nondegenerate, $D$ is determined by $c$; write $D=D_c$. Since $c\mapsto\Tr^s_1(t^{-1}c)$ is additive, uniqueness gives $D_{c+c'}=D_c+D_{c'}$, so $D$ is an $\F_p$-linear map $\F_r\to\F_r$, injective because $D_c=0$ forces $c=0$. With $B$ the adjoint of $D$ we get $\Tr^s_1(B(\sigma(t))c)=\Tr^s_1(t^{-1}c)$ for all $c$, i.e.\ $\sigma(t)=A(t^{-1})$ with $A=B^{-1}$. Hence $(H_t)^*=\Tr^s_1(t^{-1}A^\top K')$, and restricting to $w=0$ and to $v$ in $\{0\}$, $\Kz\setminus\{0\}$, $\Ko$ shows that those of $P^{(0)},P^{(2)},P^{(1)}$ which occur are of inversion type (the functions $A^\top K'(u,0,w)$, resp.\ $A^\top K'(u,v,0)$, do not depend on $w$, resp.\ on $v$ within a region, by nondegeneracy of the trace form). Finally $A^\top K'$ and $K$ have the same components, hence $K'=(A^\top)^{-1}K$.
\end{proof}

\section{The main theorem}\label{sec:main}

We keep the setting of Section~\ref{sec:H}, but we no longer fix $\Kz$; in this section (F3) is not needed. Let $\mathcal L$ be the set of one-dimensional $\F_r$-subspaces (\emph{$\F_r$-lines}) of $\F_q$, so $|\mathcal L|=N:=(q-1)/(r-1)$, the sets $L\setminus\{0\}$, $L\in\mathcal L$, are the cosets of $\F_r^*$ in $\F_q^*$, and $L^\perp=\{y\in\F_q:\Tr^k_1(zy)=0\ \forall z\in L\}$ is a hyperplane with $|L^\perp|=q/r$. By (F1), $P^{(z)}$, $G^{(z)}$ and $\varepsilon_{z,\cdot}$ are constant on $L\setminus\{0\}$; we write $P^{(L)}$, $G^{(L)}$, $\varepsilon_{L,\cdot}$ and $\KL^{(L)}=\KL_{\varepsilon_{L,\cdot}}$ (with $\pi=\iota$), and $\KL^{(0)}=\KL^{\sigma_0}_{\varepsilon_{0,\cdot}}$. (The superscript distinguishes these sums from the subspace $\Kz$ of Section~\ref{sec:H}.) For $L\in\mathcal L$ and $i\in\F_r$ define the following subsets of $V$:
\begin{align*}
c_{L,i}&=\{(x,y,z): z\in L\setminus\{0\},\ H(x,y,z)=i\},\qquad B_L=\{(0,y,0): y\in L\setminus\{0\}\},\\
e_i&=\{(x,y,0): x\ne0,\ P^{(0)}(x)=i\},
\end{align*}
and let
\begin{equation}\label{eq:Omegafull}
\Om_s(H)=\{\{0\},\ e_i\ (i\in\F_r),\ c_{L,i}\ (L\in\mathcal L,\ i\in\F_r),\ B_L\ (L\in\mathcal L)\},
\end{equation}
with $e_0$ omitted if it is empty. Thus $\Om_s(H)$ is the partition of $V$ into the $\F_r$-valued level sets of $H$, cut according to the $\F_r$-line of $z$, with $\{0\}\times\F_q\times\{0\}$ cut according to the $\F_r$-line of $y$. For $s=k$ there is only one line, and $\Om_k(H)=\{\{0\},e_i,c_i,B\setminus\{0\}\}$ with $c_i=\{z\ne0,H=i\}$, $B=\{0\}\times\F_q\times\{0\}$ (this is the partition $\Om(H,\{0\})$ of \eqref{eq:Omega} below); by Corollary~\ref{cor:nwr} applied to $K$, the sets $c_i$ and $e_i\cup B$ are then the level sets of $H$ intersected with the two sign sets $B_\pm(K)$ of the vectorial dual $K=H^*$; they are the vectorial analogues of the sets $C_i(F)=B_+(F)\cap D_{F^*,i}$, $D_i(F)=B_-(F)\cap D_{F^*,i}$ of \cite{OP22} (with $F=K$), respectively of the sets $c_i(f)=\{x\in B_+(f^*):f(x)=i\}$, $d_i(f)=\{x\in B_-(f^*):f(x)=i\}$ of \cite{WWF}. For $s=1$, $\Om_1(H)$ is the partition of \cite{AK} (see Section~\ref{sec:fusion}).

For $b\in\F_q^*$ and $\xi=(a,b,c)\in V$ put $Y_b(\xi)=G^{(b)}(a)-\Tr^k_s(bc)\in\F_r$, and write $[\cdot]$ for the indicator of a condition.

\begin{lemma}\label{lem:charsums}
Let $\xi=(a,b,c)\in V$, $L\in\mathcal L$ and $i\in\F_r$. Then
\begin{align}
\chi_\xi(c_{L,i})&=[b\in L\setminus\{0\}]\,\tfrac qr\,p^{n/2}\KL^{(L)}\big(Y_b(\xi),i\big)+\delta_{a,0}\delta_{b,0}\,\tfrac qr\,Q\big(r[c\in L^\perp]-1\big),\label{eq:chic}\\
\chi_\xi(e_i)&=\delta_{b,0}\,\tfrac qr\Big(Q\delta_{a,0}+p^{n/2}\KL^{(0)}\big(G^{(0)}(a),i\big)\Big)-q\,\delta_{b,0}\delta_{i,0},\label{eq:chie}\\
\chi_\xi(B_L)&=r[b\in L^\perp]-1.\label{eq:chiB}
\end{align}
\end{lemma}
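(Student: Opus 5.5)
Each of the three identities comes from the orthogonality expansion $[H(x,y,z)=i]=\frac1r\sum_{t\in\F_r}\er(t(H(x,y,z)-i))$, followed by summing over $y$ first, as in the proof of Lemma~\ref{lem:walsh}. Throughout, $\chi_\xi(S)=\sum_{s\in S}\zeta_p^{-\ip{\xi}{s}}$ with $\xi=(a,b,c)$.

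\emph{The sets $B_L$.} Here $\chi_\xi(B_L)=\sum_{y\in L\setminus\{0\}}\eq(-by)$. Summing over all of $L$ gives $r$ if $\Tr^k_1(by)=0$ for all $y\in L$, i.e.\ $b\in L^\perp$, and $0$ otherwise. Subtracting the term $y=0$ gives \eqref{eq:chiB}.

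\emph{The sets $c_{L,i}$.} Expand $[H=i]$ and sum over $z\in L\setminus\{0\}$, $y\in\F_q$, $x\in V_n$.
\begin{itemize}
\item The term $t=0$ contributes $\frac1r\sum_{z\in L\setminus\{0\}}\sum_y\eq(-by)\sum_x\zeta_p^{-\ip{a}{x}}\eq(-cz)$. This equals $\frac1r\,q\delta_{b,0}\,Q\delta_{a,0}\sum_{z\in L\setminus\{0\}}\eq(-cz)=\delta_{a,0}\delta_{b,0}\frac qr Q\,(r[c\in L^\perp]-1)$, by the same line-sum computation as for $B_L$.
\item For $t\ne0$, the $y$-sum is $\sum_y\eq((tz-b)y)$, using $\er(t\Tr^k_s(yz))=\eq(tyz)$. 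It equals $q$ exactly when $z=t^{-1}b$. Such a $z$ lies in $L\setminus\{0\}$ iff $b\in L\setminus\{0\}$, and for $b\in L\setminus\{0\}$ every $t\in\F_r^*$ gives an admissible $z$.
\item For those $t$, the remaining sum is $\frac qr\,\er(-ti)\,\eq(-ct^{-1}b)\,W_{P^{(L)}_t}(a)$. By (F2) and \eqref{eq:walsh}, $W_{P^{(L)}_t}(a)=\varepsilon_{L,t}p^{n/2}\er(t^{-1}G^{(L)}(a))$, and $\eq(-t^{-1}bc)=\er(-t^{-1}\Tr^k_s(bc))$.
\item Collecting, the $t\ne0$ part is $\frac qr p^{n/2}\sum_t\varepsilon_{L,t}\,\er(t^{-1}Y_b(\xi)-ti)=\frac qr p^{n/2}\KL^{(L)}(Y_b(\xi),i)$. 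Together with the $t=0$ term this is \eqref{eq:chic}.
\end{itemize}

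\emph{The sets $e_i$.} Compute first the sum over the larger set $\{(x,y,0):P^{(0)}(x)=i\}$, then subtract the points with $x=0$. The $y$-sum gives $q\delta_{b,0}$, and the $z$-coordinate is $0$, so $c$ plays no role.
\begin{itemize}
\item The term $t=0$ gives $\frac qr\delta_{b,0}Q\delta_{a,0}$.
\item The terms $t\ne0$ give $\frac qr\delta_{b,0}\sum_t\er(-ti)W_{P^{(0)}_t}(a)$. Using $(P^{(0)}_t)^*=\Tr^s_1(\sigma_0(t)G^{(0)})$, this becomes $\frac qr\delta_{b,0}\,p^{n/2}\KL^{(0)}(G^{(0)}(a),i)$.
\item The removed points are $\{(0,y,0)\}$, which lie in the set exactly when $P^{(0)}(0)=i$. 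By (F4) this means $i=0$, and their contribution is $\sum_y\eq(-by)=q\delta_{b,0}$. This explains the term $-q\delta_{b,0}\delta_{i,0}$ and completes \eqref{eq:chie}.
\end{itemize}

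The only real difficulty is bookkeeping in the $c_{L,i}$ case. One must check that the $y$-summation pins down $z=t^{-1}b$, and that this $z$ lies in $L\setminus\{0\}$ for all $t\in\F_r^*$ simultaneously when $b\in L\setminus\{0\}$, and for none otherwise; this uses that $L\setminus\{0\}$ is a coset of $\F_r^*$. One must also check that (F1) makes $P^{(t^{-1}b)}=P^{(L)}$ independent of $t$. These two facts are what produce a full sum over $t$ of the shape $\KL^{(L)}$.
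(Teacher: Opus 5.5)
Your proof is correct and is essentially the paper's argument. You expand the indicator of $\{H=i\}$ by orthogonality, sum over $y$ first to force $z=t^{-1}b$, use (F1) and (F2) to obtain $\KL^{(L)}(Y_b(\xi),i)$, and use $P^{(0)}(0)=0$ from (F4) for the restriction $x\ne0$ in $e_i$; the only cosmetic difference is that you subtract the points $(0,y,0)$ afterwards, whereas the paper subtracts the $x=0$ term inside each Walsh sum, which is the same computation.
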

\begin{proof}
Writing the indicator of $\{H=i\}$ as $r^{-1}\sum_{t\in\F_r}\er(t(H-i))$ and using $\er(t\Tr^k_s(yz))=\eq(tyz)$,
\[
\chi_\xi(c_{L,i})=\frac1r\sum_{t\in\F_r}\er(-ti)\sum_{z\in L\setminus\{0\}}\eq(-cz)\sum_{x\in V_n}\zeta_p^{P^{(z)}_t(x)-\ip{a}{x}}\sum_{y\in\F_q}\eq\big((tz-b)y\big).
\]
The inner sum is $q$ if $tz=b$ and $0$ otherwise. For $t=0$ this forces $b=0$, and the contribution is $\frac qr\delta_{b,0}\,Q\delta_{a,0}\sum_{z\in L\setminus\{0\}}\eq(-cz)=\frac qr\delta_{a,0}\delta_{b,0}Q\big(r[c\in L^\perp]-1\big)$. For $t\ne0$ we need $z=b/t\in L\setminus\{0\}$, i.e.\ $b\in L\setminus\{0\}$ (as $L$ is $\F_r^*$-stable), and then $P^{(b/t)}=P^{(L)}$ by (F1), so the contribution is
\[
\frac qr\sum_{t\ne0}\er(-ti)\,\eq(-cbt^{-1})\,W_{P^{(L)}_t}(a)=\frac qr\,p^{n/2}\sum_{t\ne0}\varepsilon_{L,t}\,\er\big(t^{-1}(G^{(L)}(a)-\Tr^k_s(bc))-ti\big),
\]
by \eqref{eq:walsh} and (F2). This is \eqref{eq:chic}. For $e_i$, the sum over $y$ gives $q\delta_{b,0}$, and
\[
\sum_{x\ne0,\,P^{(0)}(x)=i}\zeta_p^{-\ip{a}{x}}=\frac1r\sum_{t\in\F_r}\er(-ti)\Big(\sum_{x}\zeta_p^{P^{(0)}_t(x)-\ip{a}{x}}-1\Big)
=\frac1r\Big(Q\delta_{a,0}+p^{n/2}\KL^{(0)}(G^{(0)}(a),i)-r\delta_{i,0}\Big),
\]
where $W_{P^{(0)}_t}(a)=\varepsilon_{0,t}p^{n/2}\er(\sigma_0(t)G^{(0)}(a))$ by (F2); this gives \eqref{eq:chie}, and \eqref{eq:chiB} is immediate.
\end{proof}

\begin{theorem}\label{thm:main}
Let $n$ be even with $s\le n/2$, and let $(P^{(z)})_{z\in\F_q}$ satisfy (F1), (F2), (F4). Let $H$ be given by \eqref{eq:H} and $\Om_s(H)$ by \eqref{eq:Omegafull}. Then $\Om_s(H)$ is Fourier-reflexive, and its dual partition is $\Om_s(K)$ for the function $K(u,v,w)=G^{(v)}(u)-\Tr^k_s(vw)$ of \eqref{eq:dualH} (which is the vectorial dual of $H$ when $\sigma_0=\iota$), with the roles of the second and third coordinates interchanged. Consequently, if all $P^{(z)}$ are even, $\Om_s(H)$ induces a symmetric association scheme on $V$ with
\[
|\Om_s(H)|-1=\frac{q-1}{r-1}\,(r+1)+r-1+\delta\quad\text{classes},\qquad
\delta=\begin{cases}0&\text{if } n=2s\text{ and }\varepsilon_{0,t}=-1\text{ for all }t\in\F_r^*,\\ 1&\text{otherwise.}\end{cases}
\]
In particular, for $s=k$ one obtains $2q+\delta$ classes.
\end{theorem}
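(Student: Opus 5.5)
The plan is to compute the dual partition explicitly from Lemma~\ref{lem:charsums}. I will show that the character vector $\xi\mapsto(\chi_\xi(U))_{U\in\Om_s(H)}$ is constant on the blocks of the partition $\Om_s(K)'$ described below, and that it separates those blocks. Concretely, for $\xi=(a,b,c)$ the candidate dual blocks are $\{0\}$, the sets
\[
c'_{L,j}=\{(a,b,c): b\in L\setminus\{0\},\ Y_b(\xi)=j\},\qquad e'_j=\{(a,0,c): a\ne0,\ G^{(0)}(a)=j\},\qquad B'_L=\{(0,0,c): c\in L\setminus\{0\}\}.
\]
This is $\Om_s(K)$ with the second and third coordinates interchanged, since $K(a,b,c)=Y_b(\xi)$ for $b\ne0$ and $K(a,0,c)=G^{(0)}(a)$. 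Note that $Y_b$ is well defined on $L\setminus\{0\}$ because $G^{(b)}=G^{(L)}$ by (F1).

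First I verify constancy on each block, reading off \eqref{eq:chic}--\eqref{eq:chiB}.
\begin{enumerate}
\item[(i)] On $c'_{L,j}$ we have $b\in L\setminus\{0\}$. Then $\chi_\xi(c_{L',i})=\delta_{L,L'}\frac qr p^{n/2}\KL^{(L)}(j,i)$ and $\chi_\xi(e_i)=0$. Also $\chi_\xi(B_{L'})=r[b\in L'^\perp]-1$, and this depends only on $L$ because $L'^\perp$ is an $\F_r$-subspace.
\item[(ii)] On $e'_j$ we have $b=0$ and $a\ne0$. The values of $\chi_\xi(c_{L,i})$ vanish, $\chi_\xi(e_i)=\frac qr p^{n/2}\KL^{(0)}(j,i)-q\delta_{i,0}$, and $\chi_\xi(B_L)=r-1$.
\item[(iii)] On $B'_L$ we have $a=b=0$ and $c\ne0$. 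Here $\chi_\xi(c_{L',i})=\frac qr Q(r[c\in L'^\perp]-1)$, $\chi_\xi(e_i)=\frac qr Q-q\delta_{i,0}$, and $\chi_\xi(B_{L'})=r-1$. The condition $c\in L'^\perp$, i.e.\ $L'\subseteq\{z:\Tr^k_s(zc)=0\}$, depends only on the line $\F_r c$.
\end{enumerate}

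Next I show the vectors are pairwise distinct. Blocks of type (i) are distinguished from all others by the nonvanishing of some $\chi_\xi(c_{L,i})$; this works because $\KL^{(L)}(j,\cdot)$ is not identically zero, being the Fourier transform of a nonzero function. Different lines $L$ are distinguished by where this nonvanishing occurs, and different $j$ are distinguished by Lemma~\ref{lem:injective} with $\pi=\iota$. Type (ii) is distinguished from type (iii) since $\chi_\xi(c_{L,i})=0$ in (ii) but $\frac qr Q(r[\cdot]-1)\ne0$ in (iii). Within type (ii), different $j$ are separated by Lemma~\ref{lem:injective} with $\pi=\sigma_0$. Within type (iii), different lines $\F_r c$ are separated because the $\F_r$-hyperplane $(\F_r c)^\perp$ determines the line. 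Finally, $\{0\}$ gives $\chi_\xi(B_L)=|B_L|=r-1$ together with the sizes of the sets, which differ from the vectors above.

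Hence $|\Omd_s(H)|\le|\Om_s(K)'|$. Since always $|\Omd|\ge|\Om|$, Fourier-reflexivity follows once $|\Om_s(K)|=|\Om_s(H)|$. Both partitions have $1+Nr+N+r+1$ blocks minus the number of empty $e_0$ (respectively $e'_0$). So the remaining point is when $e_0$ is empty, and that this condition is the same for $H$ and $K$.

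This count is the step I expect to need care. By (F4) and \eqref{eq:walsh},
\[
|\{x\ne0: P^{(0)}(x)=0\}|=\frac1r\Big(Q+p^{n/2}\sum_t\varepsilon_{0,t}\Big)-1 .
\]
This is $\ge\frac1r(Q-(r-1)p^{n/2})-1$, with equality iff all $\varepsilon_{0,t}=-1$. Setting $X=p^{n/2}$, equality at zero reads $X^2-(r-1)X-r=0$, i.e.\ $X=r$, i.e.\ $n=2s$. The hypothesis $s\le n/2$ gives $X\ge r$, so the count is positive otherwise. The same computation applies to $G^{(0)}$: it has the same sign pattern, since the duals of weakly regular functions keep their sign, and $G^{(0)}(0)=0$ by (F4). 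Therefore $e'_0$ is empty exactly when $e_0$ is, which gives the value of $\delta$.

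Lemma~\ref{lem:reflexive} then yields the translation scheme with $|\Om_s(H)|-1=N(r+1)+r-1+\delta$ classes. It is symmetric when all $P^{(z)}$ are even, since then $H(-x,-y,-z)=H(x,y,z)$ and $-1\in\F_r^*$ preserves lines, so every block satisfies $U=-U$.
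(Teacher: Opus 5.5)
Your proof is correct in substance and follows the paper's route. It uses the same region-by-region reading of Lemma~\ref{lem:charsums}, Lemma~\ref{lem:injective} for injectivity in $Y_b(\xi)$ (with $\pi=\iota$) and in $G^{(0)}(a)$ (with $\pi=\sigma_0$), and the same count of nonzero zeros of $P^{(0)}$ and $G^{(0)}$ via (F4).

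The one structural difference is that you close with the general inequality $|\Omd|\ge|\Om|$, which holds because the Fourier transforms of the cell indicators are linearly independent. Constancy of the character vector on the blocks of $\Om_s(K)$, together with $|\Om_s(K)|=|\Om_s(H)|$, then already forces $\Omd=\Om_s(K)$, so your separation paragraph is redundant. That is fortunate, because one claim there is false. ``Nonvanishing of some $\chi_\xi(c_{L,i})$'' does not separate type (i) from type (iii), since in (iii) all $\chi_\xi(c_{L,i})=\frac qr Q(r[c\in L^\perp]-1)$ are nonzero as well. The correct separator is $\chi_\xi(B_L)=r[b\in L^\perp]-1$, which equals $-1$ for some $L$ exactly when $b\ne0$; this is what the paper uses.

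What you do need, and do not check, is that $e_i$ and $e'_j$ are nonempty for $i,j\ne0$. This is exactly where $s\le n/2$ is genuinely used; it is not automatic for mixed sign patterns. By (F4), $|\{x:P^{(0)}(x)=i\}|=p^{n-s}+p^{n/2-s}\KL^{(0)}(0,i)\ge p^{n/2-s}(p^{n/2}-r+1)>0$. The same bound holds for $G^{(0)}$. Its components $G^{(0)}_u=(P^{(0)}_{\sigma_0^{-1}(u)})^*$ have sign $\varepsilon_{0,\sigma_0^{-1}(u)}$, which is a permuted pattern rather than literally the same one. Their Walsh value at $0$ is $\pm p^{n/2}$ because $P^{(0)}(0)=0$.

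Two harmless slips remain:
\begin{itemize}
\item The block count is $1+r+Nr+N-[e_0=\emptyset]$, not $1+Nr+N+r+1$ minus the number of empty $e_0$; your own final class count confirms this.
\item In case (iii), $\chi_\xi(e_i)=\frac qr(Q+p^{n/2}\KL^{(0)}(0,i))-q\delta_{i,0}$. You dropped the $\KL^{(0)}$ term, but the value is still independent of $c$, so constancy is unaffected.
\end{itemize}
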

\begin{proof}
By Lemma~\ref{lem:charsums} the vector $(\chi_\xi(U))_{U\in\Om_s(H)}$ depends on $\xi=(a,b,c)$ as follows.
\begin{enumerate}
\item[(i)] $b\ne0$, say $b\in L_0\setminus\{0\}$: $\chi_\xi(e_i)=0$, $\chi_\xi(c_{L,i})=0$ for $L\ne L_0$, $\chi_\xi(c_{L_0,i})=\frac qr p^{n/2}\KL^{(L_0)}(Y_b(\xi),i)$, and $\chi_\xi(B_L)=r[b\in L^\perp]-1$. Since $b\in L^\perp$ if and only if $L\subseteq\{y\in\F_q:\Tr^k_s(by)=0\}$, the vector $([b\in L^\perp])_{L\in\mathcal L}$ determines this $\F_r$-hyperplane, whose $\F_r$-orthogonal complement is $L_0$; so it determines $L_0$. For fixed $L_0$ the vector depends on $\xi$ only through $Y_b(\xi)$, injectively by Lemma~\ref{lem:injective}. Every $Y\in\F_r$ occurs, since $\Tr^k_s(bc)$ runs through $\F_r$ as $c$ does. So this region consists of exactly $Nr$ classes, indexed by $(L_0,Y)$.
\item[(ii)] $b=0$, $a\ne0$: $\chi_\xi(c_{L,i})=0$, $\chi_\xi(B_L)=r-1$, $\chi_\xi(e_i)=\frac qr p^{n/2}\KL^{(0)}(G^{(0)}(a),i)-q\delta_{i,0}$. The vector depends injectively on $G^{(0)}(a)$; the classes are $\{(a,0,c):a\ne0,\ G^{(0)}(a)=Y\}$, nonempty for $Y\ne0$ (by the count \eqref{eq:count} below applied to $G^{(0)}$, whose components are weakly regular bent with $W_{G^{(0)}_t}(0)=\pm p^{n/2}$, see the end of the proof), and for $Y=0$ nonempty iff $G^{(0)}$ has a nonzero zero; write $\delta'\in\{0,1\}$ for this.
\item[(iii)] $a=b=0$, $c\ne0$: $\chi_\xi(c_{L,i})=\frac qr Q(r[c\in L^\perp]-1)$, $\chi_\xi(e_i)=\frac qr(Q+p^{n/2}\KL^{(0)}(0,i))-q\delta_{i,0}$, $\chi_\xi(B_L)=r-1$. As in (i), the vector $([c\in L^\perp])_L$ determines the $\F_r$-line of $c$, so this region splits into the $N$ classes $\{(0,0,c): c\in L\setminus\{0\}\}$.
\item[(iv)] $\xi=0$.
\end{enumerate}
These classes are distinct: (i) is separated from (ii)--(iv) by the vector $(\chi_\xi(B_L))_L$, which is constant $r-1$ only for $b=0$; (ii) from (iii) and (iv) by $\chi_\xi(c_{L,i})$, which vanishes for all $L$ in (ii), equals $-\frac qrQ\ne0$ for some $L$ in (iii) (a line not contained in the $\F_r$-hyperplane of $c$), and equals $|c_{L,i}|>0$ in (iv); and (iii) from (iv) by the same values. Hence $|\widehat{\Om_s(H)}|=1+Nr+(r-1+\delta')+N$. On the other hand $c_{L,i}\ne\emptyset$ (choose $y$), $B_L\ne\emptyset$, $|e_i|=q\,(|\{x:P^{(0)}(x)=i\}|-\delta_{i,0})$, and by \eqref{eq:walsh} and (F4)
\begin{equation}\label{eq:count}
|\{x:P^{(0)}(x)=i\}|=\frac1r\sum_{t\in\F_r}W_{P^{(0)}_t}(0)\er(-ti)=p^{n-s}+p^{n/2-s}\KL^{(0)}(0,i),
\end{equation}
which is positive for $i\ne0$ since $|\KL^{(0)}(0,i)|\le r-1<p^{n/2}$ by $s\le n/2$. (If the components of $P^{(0)}$ are of the same type, $s\le n/2$ is automatic by Nyberg's bound, see \cite[Section~1]{CM24}; for mixed sign patterns it is a genuine hypothesis, as the planar function $x^2$ on $\F_{p^n}$, with $s=n$, shows.) So $e_i\ne\emptyset$ for $i\ne0$, $e_0\ne\emptyset$ iff $\delta:=[P^{(0)}\text{ has a nonzero zero}]=1$, and $|\Om_s(H)|=1+Nr+(r-1+\delta)+N$. It remains to show $\delta=\delta'$ and to evaluate $\delta$. By \eqref{eq:count}, $|\{x:P^{(0)}(x)=0\}|=p^{n-s}+p^{n/2-s}\sum_{t\ne0}\varepsilon_{0,t}$. The components of $G^{(0)}$ are $G^{(0)}_t=(P^{(0)}_{\sigma_0^{-1}(t)})^*$, weakly regular bent with sign $\varepsilon_{0,\sigma_0^{-1}(t)}$ and $W_{G^{(0)}_t}(0)=\varepsilon_{0,\sigma_0^{-1}(t)}p^{n/2}\zeta_p^{P^{(0)}_{\sigma_0^{-1}(t)}(0)}=\varepsilon_{0,\sigma_0^{-1}(t)}p^{n/2}$ by (F4); hence $|\{a:G^{(0)}(a)=0\}|$ is the same number. This number is at least $p^{n/2-s}(p^{n/2}-r+1)>1$ unless $n=2s$, and for $n=2s$ it equals $r+\sum_{t\ne0}\varepsilon_{0,t}$, which is $1$ iff all $\varepsilon_{0,t}=-1$. This gives $\delta=\delta'$, the formula for $\delta$, and the number of classes. If all $P^{(z)}$ are even, every cell is symmetric (note $P^{(-z)}=P^{(z)}$ by (F1)), so the scheme is symmetric by Lemma~\ref{lem:reflexive}. Finally, the classes found in (i)--(iv) are $\{(a,b,c):b\in L\setminus\{0\},K(a,b,c)=Y\}$, $\{(a,0,c):a\ne0,G^{(0)}(a)=Y\}$, $\{(0,0,c):c\in L\setminus\{0\}\}$, $\{0\}$, which is $\Om_s(K)$ with the second and third coordinates interchanged (the cut variable of $K$ is $v$).
\end{proof}

Fusions of $\Om_s(H)$ along $\F_r$-subspaces of $\F_q$ are again Fourier-reflexive if the sign patterns of the ingredients are compatible with the fusion. For an $\F_r$-subspace $\Kz$ of $\F_q$ with $\Kp$ as in Section~\ref{sec:H} let
\begin{equation}\label{eq:Omega}
\Om(H,\Kz)=\{\{0\},\ B_1,\ B_2,\ e_i,\ d_i,\ c_i\ (i\in\F_r)\},
\end{equation}
where $c_i=\bigcup_{L\not\subseteq\Kz}c_{L,i}$, $d_i=\bigcup_{L\subseteq\Kz}c_{L,i}$, $B_1=\bigcup_{L\subseteq\Kp}B_L$, $B_2=\bigcup_{L\not\subseteq\Kp}B_L$, and empty cells are omitted ($c_i,B_1=\emptyset$ iff $\Kz=\F_q$; $d_i,B_2=\emptyset$ iff $\Kz=\{0\}$). Thus $\Om(H,\Kz)$ is the partition into the level sets of $H$ cut according to $z\in\Ko:=\F_q\setminus\Kz$, $z\in\Kz\setminus\{0\}$, $z=0$, with $\{0\}\times\F_q\times\{0\}$ split along $\Kp$; for $\Kz=\{0\}$ and $s=1$ it is the partition of \cite[Theorem~1]{AKMO}.

\begin{theorem}\label{thm:K0}
Let $n$ be even with $s\le n/2$, let $\Kz$ be an $\F_r$-subspace of $\F_q$, and let $(P^{(z)})$ satisfy (F1), (F2), (F4) and (F3) with respect to $\Kz$. Then $\Om(H,\Kz)$ is Fourier-reflexive with dual partition $\Om(K,\Kz)$ (coordinates interchanged as above), and it induces a symmetric association scheme with
\[
|\Om(H,\Kz)|-1=\begin{cases} 2r+\delta & \text{if }\Kz=\{0\}\text{ or }\Kz=\F_q,\\ 3r+1+\delta&\text{otherwise,}\end{cases}
\]
$\delta$ as in Theorem~\ref{thm:main}, provided all $P^{(z)}$ are even.
\end{theorem}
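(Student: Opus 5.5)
The plan is to run the proof of Theorem~\ref{thm:main} again, now for the coarser partition $\Om(H,\Kz)$. Each cell of $\Om(H,\Kz)$ is a union of cells of $\Om_s(H)$. So by Lemma~\ref{lem:charsums} its character values are sums of the values computed there:
\[
\chi_\xi(c_i)=\sum_{L\not\subseteq\Kz}\chi_\xi(c_{L,i}),\qquad
\chi_\xi(d_i)=\sum_{L\subseteq\Kz}\chi_\xi(c_{L,i}),\qquad
\chi_\xi(B_1)=\sum_{L\subseteq\Kp}\chi_\xi(B_L),\qquad
\chi_\xi(B_2)=\sum_{L\not\subseteq\Kp}\chi_\xi(B_L).
\]
Since $\Om(H,\Kz)$ is a fusion of $\Om_s(H)$, we have $|\widehat{\Om(H,\Kz)}|\ge|\Om(H,\Kz)|$. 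It therefore suffices to show that the dual classes are exactly the cells of $\Om(K,\Kz)$, with coordinates interchanged, and then to count.

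The first step evaluates these sums region by region, following (i)--(iv) of the proof of Theorem~\ref{thm:main}.

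For $b\ne0$, only the line $L_0\ni b$ contributes to the $c$- and $d$-sums. Whether $L_0\subseteq\Kz$ is decided by whether the $d_i$ or the $c_i$ values are nonzero. By (F3), $\KL^{(L_0)}$ equals $\KL_{\varepsilon_{2,\cdot}}$ or $\KL_{\varepsilon_{1,\cdot}}$ according to that case, so by Lemma~\ref{lem:injective} the vector determines exactly $Y_b(\xi)$ and the membership $b\in\Kz$. This is the point where (F3) is indispensable: otherwise the sum kernel would depend on $L_0$ itself. For the $B$-cells, $\chi_\xi(B_1)=r|\{L\subseteq\Kp:L\subseteq b^{\perp}\}|-|\{L\subseteq\Kp\}|$. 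Here $L\subseteq b^\perp$ means $L$ lies in the $\F_r$-hyperplane $\{\Tr^k_s(by)=0\}$. Its intersection with $\Kp$ is all of $\Kp$ iff $b\in(\Kp)^\perp=\Kz$, and is a hyperplane of $\Kp$ otherwise. So these values again only record whether $b\in\Kz$, which is consistent with the $c/d$ information and causes no extra splitting. The region $b\ne0$ thus gives at most $2r$ classes: $\{b\in\Ko,K=Y\}$ and $\{b\in\Kz\setminus\{0\},K=Y\}$, with one family absent in the extreme cases.

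For $b=0$, $a\ne0$, the computation is unchanged, giving the $r-1+\delta$ classes $\{G^{(0)}(a)=Y\}$, where $\delta=\delta'$ as before.

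For $a=b=0$, $c\ne0$, we get $\chi_\xi(c_i)=\frac qrQ\bigl(r\,|\{L\not\subseteq\Kz: L\subseteq c^\perp\}|-|\{L\not\subseteq\Kz\}|\bigr)$, and similarly for $d_i$. Counting lines of $\Kz$ inside $c^\perp\cap\Kz$ shows that this depends only on whether $c\in\Kz^\perp=\Kp$. Hence the region splits into at most the two classes $c\in\Kp\setminus\{0\}$ and $c\notin\Kp$.

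Separating the regions from one another works exactly as in Theorem~\ref{thm:main}. There is one point to check: within the region $a=b=0$ the two classes really are separated. The line counts differ there, since $\Kz$ is not contained in $c^\perp$ when $c\notin\Kp$, provided $\Kz\ne\{0\}$ and $\Kp\ne\{0\}$; when one of them is trivial, only one of the two classes occurs.

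The final step counts cells. We have $|\Om(H,\Kz)|=1+|\{B_1,B_2\}\text{ nonempty}|+(r-1+\delta)+r\cdot|\{c,d\}\text{ nonempty}|$. The dual count is $1+(r-1+\delta)+r\cdot\#\{\text{cases } b\in\Ko,\ b\in\Kz\setminus\{0\}\}+\#\{c\in\Kp\setminus\{0\},\ c\notin\Kp\}$. These agree, because $\Kz=\{0\}$ iff $\Kp=\F_q$ and $\Kz=\F_q$ iff $\Kp=\{0\}$. This gives $2r+\delta$ in the extreme cases and $3r+1+\delta$ otherwise. Symmetry follows from evenness as in Theorem~\ref{thm:main}, via Lemma~\ref{lem:reflexive}.

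I expect the main obstacle to be the careful bookkeeping of the line counts in the $B$- and $c/d$-sums. One must verify that they depend only on membership in $\Kz$ or in $\Kp$ and do not reintroduce finer line information. This rests on $(\Kp)^\perp=\Kz$ and on these being $\F_r$-subspaces.
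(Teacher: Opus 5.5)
Your proposal is correct and follows essentially the paper's route: sum the values of Lemma~\ref{lem:charsums} over the lines inside and outside $\Kz$ (resp.\ $\Kp$), use (F3) and Lemma~\ref{lem:injective} in the region $b\ne0$, reduce the line counts to membership in $\Kz$ or $\Kp$, and match the cell counts. One correction: $|\widehat{\Om(H,\Kz)}|\ge|\Om(H,\Kz)|$ holds for every partition, because the Fourier transforms of the cell indicators are linearly independent and constant on dual cells; it does not follow from $\Om(H,\Kz)$ being a fusion of $\Om_s(H)$, since a fusion only gives the opposite bound $|\widehat{\Om(H,\Kz)}|\le|\widehat{\Om_s(H)}|$.
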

\begin{proof}
Summing \eqref{eq:chic} over the lines in $\Kz$, respectively not in $\Kz$, and using (F3) and $\sum_{L\subseteq\Kz}(r[c\in L^\perp]-1)=|\Kz|[c\in\Kp]-1$ (count the lines of $\Kz$ contained in the hyperplane $c^\perp$), one obtains
\begin{align*}
\chi_\xi(c_i)&=[b\in\Ko]\,\tfrac qr\,p^{n/2}\KL^{(1)}\big(Y_b(\xi),i\big)+\delta_{a,0}\delta_{b,0}\,\tfrac qr\,Q\big(q\delta_{c,0}-|\Kz|[c\in\Kp]\big),\\
\chi_\xi(d_i)&=[b\in\Kz\setminus\{0\}]\,\tfrac qr\,p^{n/2}\KL^{(2)}\big(Y_b(\xi),i\big)+\delta_{a,0}\delta_{b,0}\,\tfrac qr\,Q\big(|\Kz|[c\in\Kp]-1\big),\\
\chi_\xi(B_1)&=|\Kp|[b\in\Kz]-1,\qquad \chi_\xi(B_2)=q\delta_{b,0}-|\Kp|[b\in\Kz],
\end{align*}
with $\KL^{(1)},\KL^{(2)}$ the sums for the common sign patterns on $\Ko$ and $\Kz\setminus\{0\}$. The regions $b\in\Ko$, $b\in\Kz\setminus\{0\}$, ($b=0,a\ne0$), ($a=b=0,c\ne0$), $\xi=0$ then split into $r$, $r$, $r-1+\delta'$, $[\Kz\ne\F_q]+[\Kz\ne\{0\}]$ (according to $c\in\Kp$ or not), and $1$ classes respectively, by the same arguments as in the proof of Theorem~\ref{thm:main}; they are separated by $\chi_\xi(B_1)$ (which is $-1$ exactly for $b\in\Ko$), $\chi_\xi(B_2)$ (which is $-|\Kp|$ exactly for $b\in\Kz\setminus\{0\}$), $\chi_\xi(c_i)$ or $\chi_\xi(d_i)$ (zero in the region $b=0,a\ne0$, nonzero for $a=b=0,c\ne0$), and $|c_i|,|d_i|>0$. Counting the cells of $\Om(H,\Kz)$ as in the proof of Theorem~\ref{thm:main} gives $|\Om(H,\Kz)|=|\widehat{\Om(H,\Kz)}|$.
\end{proof}

\begin{remark}\label{rem:main}
\begin{enumerate}
\item[(a)] Theorem~\ref{thm:main} needs no condition on the sign patterns of the ingredients: they may differ from line to line, and $H$ may have only non-weakly regular components, only weakly regular ones, or both. Nor does it need $H$ to be vectorial dual-bent: the ingredient $P^{(0)}$ may be vectorial dual-bent but not of inversion type, in which case $H$ need not be vectorial dual-bent (and is not, in the two- or three-ingredient case, by Proposition~\ref{prop:equiv}); see Example~\ref{ex:PS}. The inversion condition is needed only for the ingredients on $z\ne0$, where the term $\Tr^k_s(yz)$ forces the scaling $t^{-1}$ (Lemma~\ref{lem:walsh}). Condition (F3) enters only for the fusions along $\Kz$ in Theorem~\ref{thm:K0}, where the cells $d_i$, $c_i$ do not distinguish the lines in $\Kz$, resp.\ outside $\Kz$, while the dual partition does when the sign patterns differ.
\item[(b)] Splitting the sets $B_L$ (resp.\ $B_1,B_2$ in Theorem~\ref{thm:K0}) off the level set of $0$ is the mechanism of \cite[Lemma~13]{AKMO3}, by which a subspace contained in a cell of a Fourier-reflexive partition is made a cell of its own; there it yields the $(p^m+1)$-class schemes of \cite[Theorem~4(ii)]{AKMO3}. Likewise, the degenerate case $\delta=0$ ($n=2s$, all components of $P^{(0)}$ weakly regular but not regular, so that $P^{(0)}$ has no nonzero zero) is the phenomenon behind the $(p^m-1)$-class schemes in \cite[Theorem~4(i)]{AKMO3}.
\item[(c)] The cell sizes are $|c_{L,i}|=\frac qr Q(r-1)$, $|B_L|=r-1$, $|e_i|=q(p^{n-s}+p^{n/2-s}\KL^{(0)}(0,i))-q\delta_{i,0}$; for $P^{(0)}$ with components of the same type $\varepsilon_0$, $\KL^{(0)}(0,i)=\varepsilon_0(r\delta_{i,0}-1)$.
\item[(d)] The character sums are, up to the factor $\frac qr p^{n/2}$, values of the Kloosterman-type sums $\KL_\varepsilon(Y,i)$ over $\F_r$, which are not rational in general. This is in contrast with the scalar constructions \cite{OP22,AKMO}, where $\ell$-form conditions guarantee integrality; here the counting of the dual partition rests on Lemma~\ref{lem:injective} instead.
\item[(e)] Evenness is used only for the symmetry of the scheme. The proofs do not use that $H$ is bent, only \eqref{eq:walsh} for the components of the ingredients. They use the $\F_r$-linear structure of the cutting sets in two places: $\F_r^*$-stability (so that $b/t\in L\Leftrightarrow b\in L$) and additivity (so that the sums $\sum_{z\in L}\eq(-cz)$ take only two values). Example~\ref{ex:K0} shows that for a non-subspace $\Kz$ the partition $\Om(H,\Kz)$ is not Fourier-reflexive.
\item[(f)] With $\Tr^k_s(yz^{l-1})$, $\gcd(l-1,q-1)=1$, in place of $\Tr^k_s(yz)$ one has $z=(t^{-1}v)^{d-1}$ in Lemma~\ref{lem:walsh}, $(l-1)(d-1)\equiv1\bmod(q-1)$, and the same proofs apply if $\sigma(t)=t^{1-d}$ replaces the inversion in (F2) and the cutting sets are stable under $z\mapsto cz^{d-1}$. We do not pursue this.
\end{enumerate}
\end{remark}

\section{Changing the level: fusion schemes}\label{sec:fusion}

Let $s'\mid s$, $r'=p^{s'}$, and $\gamma\in\F_r^*$. Then
\[
H'=\Tr^s_{s'}(\gamma H)\colon V\to\F_{r'},\qquad H'(x,y,z)=\Tr^s_{s'}\big(\gamma P^{(z)}(x)\big)+\Tr^k_{s'}(\gamma yz),
\]
and $\Kz$ is also an $\F_{r'}$-subspace. Let $\Om'=\Om(H',\Kz)$ be the partition \eqref{eq:Omega} at level $s'$, i.e.\ with cells $c'_j=\{z\in\Ko:H'=j\}$, $d'_j$, $e'_j=\{(x,y,0):x\ne0,\Tr^s_{s'}(\gamma P^{(0)}(x))=j\}$, $j\in\F_{r'}$, and $B_1,B_2$ as before ($\Kp$ does not depend on the level).

\begin{theorem}\label{thm:fusion}
In the situation of Theorem~\ref{thm:K0}, suppose in addition that $P^{(0)}$ is of inversion type ($\sigma_0=\iota$ after normalization of $G^{(0)}$). Then for every $s'\mid s$ and $\gamma\in\F_r^*$:
\begin{enumerate}
\item[(i)] $\Om'$ is a fusion of $\Om$: $c'_j=\bigcup_{\Tr^s_{s'}(\gamma i)=j}c_i$, and likewise for $d'_j$, $e'_j$.
\item[(ii)] $\Om'$ is Fourier-reflexive and induces a fusion scheme of the scheme of Theorem~\ref{thm:K0}, with $2r'+\delta'$ or $3r'+1+\delta'$ classes according to Theorem~\ref{thm:K0}, where $\delta'=\delta$ if $s'=s$ and $\delta'=1$ if $s'<s$ (as then $n\ge2s>2s'$).
\item[(iii)] If $\gamma_1,\gamma_2\in\F_r^*$ lie in different cosets of $\F_{r'}^*$, then $\Om'_{\gamma_1}\ne\Om'_{\gamma_2}$. Hence the scheme of Theorem~\ref{thm:K0} has, for every $s'\mid s$, at least $(r-1)/(r'-1)$ different fusion schemes of this kind.
\end{enumerate}
\end{theorem}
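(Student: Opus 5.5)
Part (i) is a direct unfolding of the definitions. Since $\Tr^s_{s'}\colon\F_r\to\F_{r'}$ is additive and $\Tr^s_{s'}(\gamma\,\Tr^k_s(yz))=\Tr^k_{s'}(\gamma yz)$, we get $H'=\Tr^s_{s'}(\gamma H)$ pointwise. Hence a point of $c_i$ lies in $c'_j$ exactly when $\Tr^s_{s'}(\gamma i)=j$, and the same argument applies to $d_i$ and $e_i$. The regions in $z$ are the same at both levels, as are $B_1$ and $B_2$, because $\Kz$ and $\Kp$ do not depend on the level. So every cell of $\Om'$ is a union of cells of $\Om$. Part (iii) also follows from this description. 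If $\gamma_1\F_{r'}^*\ne\gamma_2\F_{r'}^*$, the $\F_{r'}$-linear functionals $i\mapsto\Tr^s_{s'}(\gamma_1 i)$ and $i\mapsto\Tr^s_{s'}(\gamma_2 i)$ on $\F_r$ have different kernels, since the kernel determines $\gamma$ up to $\F_{r'}^*$. Choose $i$ in one kernel but not the other. Because $c_i\ne\emptyset$, or $d_i\ne\emptyset$ if $\Kz=\F_q$, the zero cells $c'_0$ at the two choices of $\gamma$ differ. This gives the count $(r-1)/(r'-1)$.

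For (ii), I would avoid computing dual partitions again and instead apply Theorem~\ref{thm:K0} at level $s'$ to $H'$. This requires writing $H'$ in the form \eqref{eq:H} with ingredients $P'^{(z)}=\Tr^s_{s'}(\gamma P^{(z)})\colon V_n\to\F_{r'}$ and then checking (F1)--(F4) at level $s'$.

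(F1) holds because $\F_{r'}^*\subseteq\F_r^*$.

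For (F2), take $t'\in\F_{r'}^*$. The component is $P'^{(z)}_{t'}=\Tr^{s'}_1(t'\Tr^s_{s'}(\gamma P^{(z)}))=P^{(z)}_{t'\gamma}$. This is weakly regular with sign $\varepsilon_{z,t'\gamma}$, and its dual is $\Tr^s_1((t'\gamma)^{-1}G^{(z)})=\Tr^{s'}_1\big(t'^{-1}\Tr^s_{s'}(\gamma^{-1}G^{(z)})\big)$. So $P'^{(z)}$ is of inversion type with $G'^{(z)}=\Tr^s_{s'}(\gamma^{-1}G^{(z)})$. This is exactly the point where the additional hypothesis $\sigma_0=\iota$ is needed at $z=0$. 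With a general $\sigma_0$, the dual of $P^{(0)}_{t'\gamma}$ is $\Tr^s_1(\sigma_0(t'\gamma)G^{(0)})$, and this need not be an $\F_{r'}$-valued component structure indexed by $t'$, so $P'^{(0)}$ need not be vectorial dual-bent at level $s'$.

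(F3) is inherited, since the signs $\varepsilon'_{z,t'}=\varepsilon_{z,t'\gamma}$ depend on $z$ only through the region. For (F4), $P'^{(0)}(0)=0$ and $G'^{(0)}(0)=0$ follow from $P^{(0)}(0)=G^{(0)}(0)=0$. The hypothesis $s'\le n/2$ follows from $s'\le s$. Evenness of the ingredients is preserved.

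Theorem~\ref{thm:K0} then gives Fourier-reflexivity and the class count for $\Om'$. Since $\Om'$ is a fusion of $\Om$ by (i), the induced scheme is a fusion scheme of the one from Theorem~\ref{thm:K0}: its relations are unions of relations.

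The remaining issue is $\delta'$. Theorem~\ref{thm:main} defines $\delta'=0$ exactly when $n=2s'$ and $\varepsilon'_{0,t'}=-1$ for all $t'$. If $s'<s$, then $n\ge 2s>2s'$, so $\delta'=1$. If $s'=s$, then $t'\gamma$ runs over all of $\F_r^*$, so $\delta'=\delta$.

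I expect the main obstacle to be (F2) at $z=0$, namely verifying the inversion-type rewriting of the dual in terms of $\F_{r'}$-components. The choice of the dual $G'$ is forced only up to $\F_p$-linear twists, so I would fix $G'^{(z)}=\Tr^s_{s'}(\gamma^{-1}G^{(z)})$ explicitly and check it against the transitivity of the trace.
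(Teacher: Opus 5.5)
\textbf{Gap in (ii): $H'$ is not of the form \eqref{eq:H}.} Your plan for (ii) is the paper's: check (F1)--(F4) at level $s'$ for $P'^{(z)}=\Tr^s_{s'}(\gamma P^{(z)})$ and invoke Theorem~\ref{thm:K0}. Your component computation $P'^{(z)}_{t'}=P^{(z)}_{t'\gamma}$ and your use of $\sigma_0=\iota$ at $z=0$ match the paper. The problem is the claim that $H'$ can be written in the form \eqref{eq:H} with these ingredients. It cannot, since the second term of $H'$ is $\Tr^k_{s'}(\gamma yz)$, not $\Tr^k_{s'}(yz)$. For $\gamma\notin\F_{r'}$ the factor $\gamma$ cannot be pulled out of $\Tr^k_{s'}$. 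Even for $\gamma\in\F_{r'}^*\setminus\{1\}$ you would need a relabelling argument. So Theorem~\ref{thm:K0}, applied to the family $(P'^{(z)})$, is a statement about $H''(x,y,z)=P'^{(z)}(x)+\Tr^k_{s'}(yz)$, not about $H'$. As written, your argument proves (ii) only for $\gamma=1$, because $\Om(H'',\Kz)$ is never connected to $\Om'$.

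\textbf{The repair (this is what the paper does).} Put $\phi(x,y,z)=(x,\gamma^{-1}y,z)$, so that $H''=H'\circ\phi$.
\begin{itemize}
\item $\phi$ maps each level-set cell of $\Om(H'',\Kz)$ onto the corresponding cell of $\Om'$, since the $z$-regions are untouched.
\item $\phi$ maps $B_1$ and $B_2$ onto themselves, because $\gamma^{-1}\Kp=\Kp$. This holds since $\Kp$ is an $\F_r$-subspace and $\gamma\in\F_r^*$.
\item Fourier-reflexivity is invariant under a linear bijection. Indeed $\chi_\xi(\phi(U))=\chi_{\phi^\top\xi}(U)$ with $\phi^\top$ the adjoint, so the dual partition of $\phi(\Om(H'',\Kz))$ is the image of the dual of $\Om(H'',\Kz)$ under $(\phi^\top)^{-1}$, with the same number of cells.
\end{itemize}
Equivalently, you can substitute $z\mapsto\gamma^{-1}z$ instead. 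This uses (F1) at level $s$ ($P^{(\gamma^{-1}z)}=P^{(z)}$) and $\gamma\Kz=\Kz$. With this step inserted, the rest of your (ii) goes through as written: (F1)--(F4), $s'\le n/2$, evenness and the computation of $\delta'$.

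\textbf{Minor point in (iii).} Saying that the zero cells $c'_0$ differ compares labelled cells, whereas the partitions are unlabelled. Your choice of $i$ already gives the unlabelled conclusion: take a point of $c_0$ and a point of $c_i$ with $\Tr^s_{s'}(\gamma_1 i)=0\ne\Tr^s_{s'}(\gamma_2 i)$. They lie in the same cell of $\Om'_{\gamma_1}$ and in different cells of $\Om'_{\gamma_2}$. Both $c_0$ and $c_i$ are nonempty because $H$ is surjective on each slice $z\in\Ko$ via $y$. This is the paper's argument.
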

\begin{proof}
(i) is clear from $\{H'=j\}=\bigcup_{\Tr^s_{s'}(\gamma i)=j}\{H=i\}$.
(ii) Let $\phi\colon V\to V$, $\phi(x,y,z)=(x,\gamma^{-1}y,z)$, and $H''=H'\circ\phi$, so $H''(x,y,z)=P'^{(z)}(x)+\Tr^k_{s'}(yz)$ with $P'^{(z)}=\Tr^s_{s'}(\gamma P^{(z)})$. The family $(P'^{(z)})$ satisfies (F1)--(F4) at level $s'$: (F1) as $\F_{r'}^*\subseteq\F_r^*$; the components are $P'^{(z)}_t=\Tr^{s'}_1(t\Tr^s_{s'}(\gamma P^{(z)}))=P^{(z)}_{t\gamma}$, $t\in\F_{r'}^*$, weakly regular with sign $\varepsilon_{z,t\gamma}$, so (F3) holds; and, since all $P^{(z)}$ including $P^{(0)}$ are of inversion type, $(P'^{(z)}_t)^*=\Tr^s_1((t\gamma)^{-1}G^{(z)})=\Tr^{s'}_1\big(t^{-1}\Tr^s_{s'}(\gamma^{-1}G^{(z)})\big)$, so (F2) holds with $G'^{(z)}=\Tr^s_{s'}(\gamma^{-1}G^{(z)})$ (this is where the inversion hypothesis on $P^{(0)}$ is used: for a general $\sigma_0$ the projection $P'^{(0)}$ need not be vectorial dual-bent, cf.\ \cite[Corollary~15]{AKMO3}), and (F4) is inherited. Hence $\Om(H'',\Kz)$ is Fourier-reflexive by Theorem~\ref{thm:K0}. Since $\gamma^{-1}\Kp=\Kp$, $\phi$ maps the cells of $\Om(H'',\Kz)$ onto the cells of $\Om'$: $\Om'=\phi(\Om(H'',\Kz))$. A linear bijection $\phi$ transforms the dual partition by the adjoint of $\phi^{-1}$, so $\Om'$ is Fourier-reflexive with the same number of cells, and the scheme induced by $\Om'$ is isomorphic to the one induced by $\Om(H'',\Kz)$. It is a fusion of the scheme of $\Om$ by (i). (The computation of the components of $P'^{(z)}$ is the one used in \cite[Section~4]{AKMO3} for projections $\Tr^m_s\circ F$ of vectorial dual-bent functions.)
(iii) Suppose $\Kz\ne\F_q$ (for $\Kz=\F_q$ argue with $d'_j$ instead). The cells $c'_j$ are unions of level sets of $H'=\Tr^s_{s'}(\gamma H)$ on $V_n\times\F_q\times\Ko$, on which $H$ is surjective onto $\F_r$ (already on each slice $z\in\Ko$, via $y$). If $\gamma_1,\gamma_2$ are $\F_{r'}$-linearly independent, there is $X\in\F_r$ with $\Tr^s_{s'}(\gamma_1X)=0\ne\Tr^s_{s'}(\gamma_2X)$; two points with $H$-values differing by $X$ lie in the same cell of $\Om'_{\gamma_1}$ but in different cells of $\Om'_{\gamma_2}$, cf.\ \cite[Proposition~5]{AKMO2}.
\end{proof}

The partitions $\Om(H,\Kz)$ are fusions of $\Om_s(H)$, and by Theorem~\ref{thm:fusion} the level-$s'$ partitions $\Om(\Tr^s_{s'}(\gamma H),\Kz)$ are fusions of them as well. The full partitions $\Om_s(H)$ and $\Om_{s'}(\Tr^s_{s'}(\gamma H))$ at different levels are, however, not comparable in general: the first cuts $z$ along $\F_r$-lines and the second along the finer $\F_{r'}$-lines, while the first has the finer level sets. They have the partitions $\Om(\Tr^s_{s'}(\gamma H),\Kz)$, $\Kz$ an $\F_r$-subspace, as common fusions.

For $s=1$ the partition $\Om_1(H)$ cuts $z\ne0$ and $y\ne0$ along the cosets of $\F_p^*$: in the notation of \cite{AK}, $c_{L,i}=D_{F,i,v}$ and $B_L=A_v$ for $L=v\F_p$, and $e_i=D_{F,i,0}$. Theorem~\ref{thm:main} gives $\frac{p^k-1}{p-1}(p+1)+p-1+\delta$ classes, and by Proposition~\ref{prop:s1} its hypotheses (F1), (F2) at $s=1$ are exactly the conditions (C1), (C2) of \cite[Theorem~1]{AK} for $l=2$: $f^{(z)}$, $z\ne0$, weakly regular $2$-forms with $f^{(cz)}=f^{(z)}$ for $c\in\F_p^*$, and $f^{(0)}$ a weakly regular $\ell$-form for some $\ell$. Thus Theorem~\ref{thm:main} with $s=1$ is the case $l=2$, $n$ even of \cite[Theorem~1]{AK} (which also covers general $l$, odd $n$, and $n=1$), proved there by a different route, and for $s>1$ it provides the $\F_{p^s}$-valued analogues. The coarser fusion $\Om(H,\{0\})$ at $s=1$ is the partition of \cite[Theorem~1]{AKMO}:

\begin{corollary}\label{cor:AKMO}
Let $s=1$, $\Kz=\{0\}$, and let $f^{(z)}\colon\F_{p^n}\to\F_p$, $z\in\F_q$, be weakly regular bent functions with $f^{(cz)}=f^{(z)}$ for $c\in\F_p^*$, all of the same type and $2$-forms for $z\ne0$, $f^{(0)}$ an $\ell$-form, with $f^{(0)}(0)=(f^{(0)})^*(0)=0$, and $n$ even. Then $F(x,y,z)=f^{(z)}(x)+\Tr^k_1(yz)$ and the partition $\Om=\{\{0\},B\setminus\{0\},D_{F,i,\mp}\}$ of \cite[Theorem~1]{AKMO} induce a $(2p+1)$-class association scheme, resp.\ a $2p$-class scheme if $n=2$ and $f^{(0)}$ is of type $(-)$.
\end{corollary}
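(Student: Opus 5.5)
The plan is to derive Corollary~\ref{cor:AKMO} from Theorem~\ref{thm:K0} with $s=1$, $r=p$ and $\Kz=\{0\}$. Then $\Kp=\F_q$ and $\Ko=\F_q^*$. With $\Kz=\{0\}$ the partition $\Om(H,\Kz)$ of \eqref{eq:Omega} consists of $\{0\}$, $B_1=B\setminus\{0\}$ (while $B_2=\emptyset$ and $d_i=\emptyset$), the cells $e_i$, and the cells $c_i=\{z\ne0,\ F=i\}$. So the proof has three parts: check the hypotheses, identify the partition with that of \cite[Theorem~1]{AKMO}, and read off the number of classes.

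\emph{Hypotheses.} We take $V_n=\F_{p^n}$ with the trace form, and $P^{(z)}=f^{(z)}$.
\begin{itemize}
\item (F1) is the assumption $f^{(cz)}=f^{(z)}$ for $c\in\F_p^*$.
\item For (F2), Proposition~\ref{prop:s1} gives the two inversion facts we need. Since $f^{(z)}$ is a weakly regular $2$-form for $z\ne0$, it is of inversion type. Since $f^{(0)}$ is an $\ell$-form, it is vectorial dual-bent with $\sigma_0(t)=t^{1-\tilde\ell}$. For $s=1$ all components $tf$ of a weakly regular $f$ are weakly regular with the same sign, because $\varepsilon p^{n/2}$ is Galois-invariant.
\item (F3) for $\Kz=\{0\}$ asks only that $\varepsilon_{z,t}$ be the same for all $z\ne0$. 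This follows from ``all of the same type''.
\item (F4) is $f^{(0)}(0)=(f^{(0)})^*(0)=0$, since $G^{(0)}=(f^{(0)})^*$ up to normalization.
\item $s=1\le n/2$ holds because $n$ is even and hence $n\ge2$.
\item Evenness is needed for symmetry. For $z\ne0$ a $2$-form satisfies $f(-x)=f(x)$. For $f^{(0)}$, an $\ell$-form gives $f(-x)=(-1)^\ell f(x)$, and $\gcd(\ell-1,p-1)=1$ forces $\ell$ even, so $f^{(0)}$ is even as well.
\end{itemize}

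\emph{Identifying the partition.} The cells of \cite[Theorem~1]{AKMO} for $l=2$ are $D_{F,i,\mp}$, the level sets of $F$ intersected with the Walsh-sign sets (equivalently the regions $z\ne0$ and $z=0$ with $(x,y)\ne0$), together with $B\setminus\{0\}$. By Corollary~\ref{cor:nwr}, and the remark after Theorem~\ref{thm:main} relating $c_i$ and $e_i\cup B$ to $B_\pm$ of the dual, these coincide with $c_i$ and $e_i$. The main care point is matching the split of $\{z=0\}$: in \cite{AKMO} the subspace $B$ is removed from the $z=0$ level-zero set, exactly as in our $e_0$ and $B_1$. I expect this to be a direct comparison of definitions.

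\emph{Counting classes.} Theorem~\ref{thm:K0} gives $2r+\delta=2p+\delta$ classes. Here $\delta=0$ exactly when $n=2s=2$ and $\varepsilon_{0,t}=-1$ for all $t$, that is, when $f^{(0)}$ is of type $(-)$. In every other case $\delta=1$, which gives $2p+1$ classes. Symmetry follows from Lemma~\ref{lem:reflexive} together with the evenness established above.
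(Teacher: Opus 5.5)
Your proposal is correct and follows the paper's route. The corollary is Theorem~\ref{thm:K0} with $s=1$ and $\Kz=\{0\}$: Proposition~\ref{prop:s1} turns the $2$-form hypothesis into inversion type for $z\ne0$ and the $\ell$-form hypothesis into vectorial dual-bentness of $f^{(0)}$ with $\sigma_0(t)=t^{1-\tilde\ell}$, and $\delta=0$ occurs exactly when $n=2$ and $f^{(0)}$ is of type $(-)$.

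Your check that $\gcd(\ell-1,p-1)=1$ forces $\ell$ to be even, so that all ingredients are even, supplies a detail the paper leaves implicit. One point to tidy: match the cells of \cite{AKMO} with $c_i$ and $e_i$ through the regions $z\ne0$ and $z=0$, not through Walsh-sign sets. The sign sets of $F^*$ coincide with these regions only when $f^{(0)}$ and the $f^{(z)}$, $z\ne0$, have different types.
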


This is \cite[Theorem~1]{AKMO} for $l=2$ and $n$ even (their condition (C3) with $\tilde l=2$ is the $2$-form condition, which by Proposition~\ref{prop:s1} is the inversion condition). In the other direction, Theorem~\ref{thm:fusion} with $s=k$ and $s'=1$ shows that every $(2q+1)$-class scheme of Theorem~\ref{thm:main} for which also $P^{(0)}$ is of inversion type has $(q-1)/(p-1)$ different $(2p+1)$-class fusion schemes of the type of \cite{AKMO}. In Section~\ref{sec:examples} we show that the flexibility of the scalar setting (ingredients varying between the cosets of $\F_p^*$) does not survive the passage to $\F_q$-valued level sets.

\section{Explicit families, examples, and the role of the hypotheses}\label{sec:examples}

We first record two infinite families to which Theorem~\ref{thm:main} applies and for which all components of $H$ are non-weakly regular; they are obtained from Example~\ref{ex:ingredients}.

\begin{corollary}\label{cor:family}
Let $p$ be an odd prime, $s\mid k$, $q=p^k$, $r=p^s$, $V_n=\F_{p^n}$ with the trace form, and let $H$ be given by \eqref{eq:H} with one of the following choices.
\begin{enumerate}
\item[(a)] $n=2m$ with $s\mid m$, and either $p\equiv1\bmod4$, or $p\equiv3\bmod4$ and $n\equiv0\bmod4$; $P^{(0)}(x)=\Tr^n_s(x^2)$, and $P^{(z)}(x_1,x_2)=\Tr^m_s(x_1x_2)$ for $z\ne0$ (with $\F_{p^n}$ identified with $\F_{p^m}\times\F_{p^m}$ by an $\F_{p^m}$-basis).
\item[(b)] $n$ even, $s\mid n$ with $n/s$ odd and $n/s\ge3$, and $\nu$ a nonsquare in $\F_r$; $P^{(0)}(x)=\Tr^n_s(x^2)$ and $P^{(z)}(x)=\Tr^n_s(\nu x^2)$ for $z\ne0$.
\end{enumerate}
Then all $r-1$ components of $H\colon\F_{p^n}\times\F_q\times\F_q\to\F_r$ are non-weakly regular bent, $H$ is vectorial dual-bent, and $\Om_s(H)$ induces a symmetric association scheme with $\frac{q-1}{r-1}(r+1)+r-1+\delta$ classes, where $\delta=0$ in case (a) with $n=2s$, and $\delta=1$ otherwise. Moreover, for every $\F_r$-subspace $\Kz$ of $\F_q$, $\Om(H,\Kz)$ induces a symmetric association scheme with $2r+\delta$ classes if $\Kz\in\{\{0\},\F_q\}$ and $3r+1+\delta$ classes otherwise, and all these schemes have fusion schemes at every level $s'\mid s$ as in Theorem~\ref{thm:fusion}.
\end{corollary}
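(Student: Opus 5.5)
The plan is to reduce everything to Theorem~\ref{thm:main}, Theorem~\ref{thm:K0}, Theorem~\ref{thm:fusion}, Proposition~\ref{prop:equiv} and Corollary~\ref{cor:nwr}. This means checking (F1)--(F4) for the three-ingredient family, computing the sign patterns from Example~\ref{ex:ingredients}, and showing that for every $t\in\F_r^*$ the sign $\varepsilon_{0,t}$ differs from the common sign $\varepsilon_{1,t}$ of the ingredients at $z\ne0$.

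(F1) is automatic, since $P^{(z)}$ is the same for all $z\ne0$. (F3) then holds for every $\F_r$-subspace $\Kz$, because all $z\ne0$ carry one sign pattern.

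For (F2), all ingredients are of inversion type with weakly regular components. The quadratic ones are handled by Example~\ref{ex:ingredients}(a). In case (a) the ingredient $\Tr^m_s(x_1x_2)$ is Example~\ref{ex:ingredients}(b) with $L$ the identity. I have to check that identifying $\F_{p^n}$ with $\F_{p^m}^2$ via an $\F_{p^m}$-basis carries the trace form to a nondegenerate form of the shape used there. Any nondegenerate symmetric form changes duals only by a linear substitution and preserves regularity, so this is harmless.

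In particular $\sigma_0=\iota$, so Proposition~\ref{prop:equiv} gives that $H$ is vectorial dual-bent.

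For (F4): $P^{(0)}(0)=0$, and $G^{(0)}(b)=-\Tr^n_s(b^2/4)$ vanishes at $0$.

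The bound $s\le n/2$ holds because $s\mid m$ in (a), and because $n/s\ge3$ in (b).

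Evenness holds since all ingredients are quadratic forms.

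The key step is the sign computation.

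In case (a), $n/s$ is even, so $\eta(t)=1$ and $\varepsilon_{0,t}=(-1)^{n-1}=-1$ if $p\equiv1\bmod4$. If $p\equiv3\bmod4$, then $\varepsilon_{0,t}=-i^n=-1$ because $4\mid n$. These hypotheses are exactly what makes $P^{(0)}$ weakly regular but not regular. For $z\ne0$ the Maiorana--McFarland components are regular, so $\varepsilon_{1,t}=+1$. Hence for every $t$ the signs differ, and Corollary~\ref{cor:nwr} shows that every $H_t$ is non-weakly regular.

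The constant $\delta$ from Theorem~\ref{thm:main} is $0$ exactly when $n=2s$ and all $\varepsilon_{0,t}=-1$. This is the case $n=2s$ in (a), and $\delta=1$ otherwise in (a).

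In case (b), $n/s$ is odd, so $\eta(t\lambda)=\eta_r(t\lambda)$. The factor $(-1)^{n-1}$ (times $i^n$) is common to both ingredients. Therefore $\varepsilon_{0,t}=c\,\eta_r(t)$ and $\varepsilon_{1,t}=c\,\eta_r(\nu t)=-c\,\eta_r(t)$, with the same constant $c$, and these differ for every $t$.

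Since $n\ge3s>2s$, we get $\delta=1$.

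The main obstacle is the sign bookkeeping: getting the $i^n$ factor right for $p\equiv3\bmod4$. It also requires confirming that it is nontrivial that the common factor cancels in (b), and that in (a) regularity of the Maiorana--McFarland ingredient does not depend on the chosen basis.

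Once the signs are settled, Theorem~\ref{thm:main} and Theorem~\ref{thm:K0} give the stated class numbers. Theorem~\ref{thm:fusion} applies because $P^{(0)}$ is of inversion type.
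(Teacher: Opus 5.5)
Your proposal is correct and follows the paper's proof essentially step for step. You verify (F1)--(F4) via Example~\ref{ex:ingredients}, show that the sign patterns of $P^{(0)}$ and of the common ingredient on $z\ne0$ are opposite for every $t$, and then invoke Corollary~\ref{cor:nwr}, Proposition~\ref{prop:equiv} and Theorems~\ref{thm:main}, \ref{thm:K0}, \ref{thm:fusion}. Your extra remark also fills in a point the paper leaves implicit: passing from the trace form on $\F_{p^n}$ to the form on $\F_{p^m}\times\F_{p^m}$ via the basis identification only composes the duals with a linear bijection, so signs and inversion type are preserved.
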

\begin{proof}
All ingredients are even, vanish at $0$, are of inversion type by Example~\ref{ex:ingredients}(a), (b), and satisfy $W_{P_t}(0)=\varepsilon_t p^{n/2}$ (the quadratic functions $\Tr^n_1(t\lambda x^2)$ have $W(0)=\varepsilon p^{n/2}$ by \cite[Corollary~3]{HK}, and Maiorana--McFarland functions are regular with $W(0)=p^{n/2}$); so (F1), (F2), (F4) hold, and (F3) holds trivially as the ingredient is the same for all $z\ne0$. In (a), $n/s$ is even, so by Example~\ref{ex:ingredients}(a) the sign pattern of $P^{(0)}$ is constant equal to $(-1)^{n-1}=-1$ if $p\equiv1\bmod4$ and to $(-1)^{n-1}i^n=-1$ if $p\equiv3\bmod4$ and $n\equiv0\bmod4$, while all components of $P^{(1)}$ are regular; hence $\varepsilon_{0,t}\ne\varepsilon_{1,t}$ for all $t$ and every $H_t$ is non-weakly regular by Corollary~\ref{cor:nwr}. In (b), $n/s$ is odd and the sign patterns of $P^{(0)}$ and $P^{(1)}$ are $\pm\eta_r(t)$ and $\pm\eta_r(\nu t)=\mp\eta_r(t)$, again opposite for every $t$. The bound $s\le n/2$ holds in both cases ($s\le n/3$ in (b)), and $n=2s$ is possible only in (a). The remaining statements are Theorems~\ref{thm:main}, \ref{thm:K0} and \ref{thm:fusion}, and Proposition~\ref{prop:equiv}.
\end{proof}

All numerical statements in this section were verified by computer; the Python scripts, which fix all parameters, are provided as supplementary material. Character sums over $\F_3$-spaces are computed as follows: for a cell $U$ and $\xi\in V$, $\chi_\xi(U)=n_0+n_1\zeta_3+n_2\zeta_3^2=(n_0-n_2)+(n_1-n_2)\zeta_3$ with $n_j=|\{u\in U:\ip{\xi}{u}=-j\}|$, so the sum is determined by two integers. For all partitions of $\F_3^8$ occurring in Examples~\ref{ex:n4}, \ref{ex:K0}, \ref{ex:PS} and \ref{ex:coset} these integers were computed exactly, by integer arithmetic on the $3^8\times3^8$ matrix of inner products, so the values of $|\Om|$ and $|\Omd|$ stated there are exact; for Example~\ref{ex:n4} the formulas of Lemma~\ref{lem:charsums} were also checked for all $\xi\in V$, with the trace form as inner product. For the larger examples on $\F_3^{10}$ and $\F_3^{12}$ (Examples~\ref{ex:n6}, \ref{ex:n8} and \ref{ex:nwr}) the counts were obtained numerically, by a double precision fast Fourier transform followed by rounding to the nearest point of the lattice $\Z+\Z\zeta_3$, with the check that every computed value lies within $10^{-4}$ of a lattice point (the accumulated rounding error of the transform is several orders of magnitude smaller); these statements are therefore numerically verified rather than formally certified.

In all examples $p=3$, $k=2$, $q=9$, and $\F_9=\F_3[t]/(t^2+1)$. Further, $\F_{81}=\F_3[u]/(u^4+u^3+u^2+1)$ with $\F_9$ embedded via $t\mapsto u+2u^2$; $\F_{3^6}=\F_9[w]/(w^3+2w^2+1)$; $\F_{3^8}=\F_9[w]/(w^4+w^3+tw^2+1)$; $\F_{3^n}$ is identified with $\F_9^{n/2}$ through these bases, and the inner product used for the dual partitions is the standard dot product on $\F_3^{n+4}$ (any nondegenerate form gives an isomorphic scheme).

\begin{example}[$s=k=2$, $\Kz=\{0\}$, $n=2k$]\label{ex:n4}
Let $n=4$, $P^{(0)}(x)=\Tr^4_2(x^2)$ on $\F_{81}$ (all components weakly regular but not regular, Example~\ref{ex:ingredients}(a)), and $P^{(z)}(x_1,x_2)=x_1x_2$ on $\F_9\times\F_9$ for $z\ne0$ (all components regular). Then $H\colon\F_3^8\to\F_9$ has eight non-weakly regular components, each with $|B_-(H_t)|=729$, and $H$ is vectorial dual-bent. The partition into the $\F_9$-valued level sets of $H$ (with $\{0\}$ split off) has $10$ cells and a dual partition with $19$ cells, so it is not Fourier-reflexive. The partition $\Om(H,\{0\})$ has $e_0=\emptyset$ ($\delta=0$), hence $19$ cells of sizes $1,8,90^{\times8},648^{\times9}$, and $|\Omd|=19$: an $18$-class symmetric scheme on $\F_3^8$. Its level-$1$ fusions $\Om(\Tr^2_1(\gamma H),\{0\})$, one for each of the four cosets $\gamma\F_3^*$, have $8$ cells of sizes $1,8,180,270,270,1944^{\times3}$ and are the $7$-class schemes of \cite[Theorem~1]{AKMO}. Moreover, for each of the four $\F_3$-lines $\Kz\subset\F_9$ the partition $\Om(\Tr^2_1(H),\Kz)$ has $12$ cells and is Fourier-reflexive, giving four $11$-class fission schemes of each $7$-class scheme (Theorem~\ref{thm:K0}, $3r+1+\delta=11$), and the full level-$1$ partition $\Om_1(\Tr^2_1(H))$ has $20$ cells and is Fourier-reflexive, a $19$-class scheme as predicted by Theorem~\ref{thm:main} ($\frac{q-1}{r-1}(r+1)+r=4\cdot4+3$). The two $19$-class schemes $\Om_2(H)$ and $\Om_1(\Tr^2_1(H))$ are different partitions of $\F_3^8$ with the same number of classes.
\end{example}

\begin{example}[$s=k=2$, $\Kz=\{0\}$, $n>2k$]\label{ex:n8}
Let $n=8$, $P^{(0)}(x)=\Tr^8_2(x^2)$ on $\F_{3^8}$, and $P^{(z)}(x_1,x_2)=\Tr^4_2(x_1x_2)$ on $\F_{81}\times\F_{81}$ for $z\ne0$. Then $H\colon\F_3^{12}\to\F_9$ has eight non-weakly regular components, $|e_0|=5904$, and $\Om(H,\{0\})$ has $20$ cells of sizes $1,8,5904,6642^{\times8},52488^{\times9}$ (Remark~\ref{rem:main}(c)) with $|\Omd|=20$: a $19$-class scheme. If $B_1$ is merged with $e_0$, the partition has $19$ cells but $|\Omd|=20$; the splitting of $B_1$ cannot be omitted, as in \cite[Theorem~1]{AKMO}.
\end{example}

\begin{example}[mixed sign patterns of the ingredients]\label{ex:n6}
Let $n=6$, $P^{(0)}(x)=\Tr^6_2(x^2)$ and $P^{(z)}(x)=\Tr^6_2(\nu x^2)$, $z\ne0$, on $\F_{3^6}$ with $\nu$ a nonsquare in $\F_9$. As $n/s=3$ is odd, $P^{(0)}_t$ is regular for the squares $t\in\F_9^*$ and weakly regular but not regular for the nonsquares, and $P^{(1)}$ has the opposite pattern. Hence all eight components of $H\colon\F_3^{10}\to\F_9$ are non-weakly regular, and $\Om(H,\{0\})$ has $20$ cells (sizes $1,8,720,648^{\times4},810^{\times4},5832^{\times9}$) with $|\Omd|=20$, a $19$-class scheme. Here $|e_i|$ takes two values for $i\ne0$, according to the quadratic character of $i$, since $\KL^{(0)}(0,i)$ depends on $i$ when the sign pattern is not constant.
\end{example}

\begin{example}[$\{0\}\ne\Kz\ne\F_q$ with an ingredient of the other type on $\Kz\setminus\{0\}$]\label{ex:K0}
Let $n=4$ and $\Kz=\F_3\subset\F_9$ (or any other $\F_3$-line), $P^{(z)}=\Tr^4_2(x^2)$ for $z\in\Kz$ and $P^{(z)}=x_1x_2$ for $z\notin\Kz$. All eight components of $H\colon\F_3^8\to\F_9$ are non-weakly regular. At level $s=1$ the full partition $\Om_1(\Tr^2_1(H))$ is Fourier-reflexive with $20$ cells although the sign patterns differ from line to line, as Theorem~\ref{thm:main} allows, and the fusion $\Om(\Tr^2_1(H),\Kz)$ has $12$ cells (sizes $1,2,6,180,270,270,486^{\times3},1458^{\times3}$, in accordance with Remark~\ref{rem:main}(c)) and $|\Omd|=12$: an $11$-class scheme. The partition of \cite[Theorem~1]{AKMO}, which cuts only at $z=0$, has $8$ cells and $|\Omd|=11$; it is not Fourier-reflexive, in accordance with the fact that hypothesis (C1) of \cite{AKMO} (all $f^{(z)}$, $z\ne0$, of the same type) is violated. At level $s=2$, $\Om(H,\Kz)$ is not Fourier-reflexive ($29$ cells, $|\Omd|=119$), as $\F_3$ is not an $\F_9$-subspace; and if, for the same $H$, the cutting set $\Kz$ in \eqref{eq:Omega} is replaced by the non-subspace $\{0,1\}$, the resulting level-$1$ partition has $12$ cells and $|\Omd|=16$.
\end{example}

The remaining examples show that the conclusion of Theorem~\ref{thm:main} may fail when one of its hypotheses is omitted.

\begin{example}[failure without inversion type]\label{ex:PS}
Let $n=4$, $\Kz=\{0\}$, $P^{(0)}$ as in Example~\ref{ex:n4} or $P^{(0)}(x)=\Tr^4_2(x^{14})$ (a projection to $\F_9$ of the Coulter--Matthews planar function $x^{14}$ on $\F_{81}$; the eight components $\Tr^4_1(\alpha x^{14})$, $\alpha\in\F_9^*$, of this projection are all weakly regular but not regular, whereas among all $80$ components of $x^{14}$ half are regular), and $P^{(z)}(x_1,x_2)=x_1x_2^{7}=x_1/x_2$, $z\ne0$, the vectorial partial spread function on $\F_9\times\F_9$, all of whose components are regular. $P^{(1)}$ is vectorial dual-bent with $\sigma(t)=t$, and one checks that there is no $G$ with $(P^{(1)}_t)^*=\Tr^2_1(t^{-1}G)$ for all $t\in\F_9^*$. For both choices of $P^{(0)}$ all components of $H$ are non-weakly regular and dual-bent, $H$ is not vectorial dual-bent (Proposition~\ref{prop:equiv}), and $\Om(H,\{0\})$ has $19$ cells with $|\Omd|=91$, resp.\ $107$. At level $s=1$, however, the inversion condition over $\F_3$ is automatic (Example~\ref{ex:ingredients}(c)), and $\Om(\Tr^2_1(\gamma H),\{0\})$ induces the $7$-class scheme of Corollary~\ref{cor:AKMO}. The roles of the two ingredients are not symmetric: with $P^{(0)}(x_1,x_2)=x_1/x_2$ (the partial spread function, $\sigma_0(t)=t$) and $P^{(z)}=\Tr^4_2(x^2)$ for $z\ne0$, all components of $H$ are again non-weakly regular, $H$ is not vectorial dual-bent (Proposition~\ref{prop:equiv}), and yet $\Om_2(H)$ is Fourier-reflexive with $20$ cells (sizes $1,8,144,72^{\times8},648^{\times9}$), a $19$-class scheme, as Theorem~\ref{thm:main} predicts. In particular, Fourier-reflexivity of $\Om_k(H)$ does not force $H$ to be vectorial dual-bent. In Lemma~\ref{lem:charsums} the failure is visible: with $(P^{(1)}_t)^*=\Tr^k_1(\sigma(t)G^{(1)})$ the sum in \eqref{eq:chic} depends on the pair $(G^{(1)}(a),\Tr^k_s(bc))$ rather than on $Y_b(\xi)$ alone, unless $\sigma(t)=t^{-1}$, so region (i) splits into up to $q^2$ classes.
\end{example}

\begin{example}[failure when ingredients vary within a coset of $\F_r^*$]\label{ex:coset}
Let $n=4$, $\Kz=\{0\}$, $P^{(0)}(x)=\Tr^4_2(x^2)$, and $P^{(z)}(x_1,x_2)=\lambda_zx_1x_2$ for $z\ne0$, where $\lambda_z\in\F_9^*$ is constant on the cosets of $\F_3^*$ in $\F_9^*$ and takes different values on different cosets. Each $P^{(z)}$ is of inversion type with regular components, so (F1), (F2), (F4) hold at level $s=1$ (with $r=3$) but (F1) fails at level $s=2$. Accordingly, $\Om_1(\Tr^2_1(H))$ is Fourier-reflexive ($20$ cells, a $19$-class scheme as in \cite{AK}) and so is its fusion $\Om(\Tr^2_1(H),\{0\})$ ($8$ cells, the $7$-class scheme of \cite[Theorem~1]{AKMO}), whereas $H$ is not vectorial dual-bent and $\Om(H,\{0\})$ has $19$ cells with $|\Omd|=91$. The reason is the one visible in Lemma~\ref{lem:charsums}: the ingredient $P^{(b/t)}$ entering $\chi_\xi(c_i)$ must be independent of $t\in\F_r^*$, which for $r=q$ means constant on $\F_q^*$. The scalar setting of \cite{AKMO} only has $t\in\F_p^*$, hence only requires constancy on $\F_p^*$-cosets (their condition (C2)).
\end{example}

\begin{example}[ingredients with non-weakly regular components]\label{ex:nwr}
Let $n=8$, $\Kz=\{0\}$, $P^{(0)}(x)=\Tr^8_2(x^2)$, and let $P^{(z)}$, $z\ne0$, be the function $H$ of Example~\ref{ex:n4}, viewed as a function $\F_3^8\to\F_9$: it is vectorial dual-bent of inversion type, and all its components are non-weakly regular with the same sign set (Corollary~\ref{cor:nwr}), so only the weak regularity in (F2) is violated. The resulting $H'\colon\F_3^{12}\to\F_9$ is vectorial bent, but $\Om(H',\{0\})$ has $20$ cells and $|\Omd|=29$. The number $29=1+1+9+18$ is what Lemma~\ref{lem:charsums} predicts when $\varepsilon_{1,t}$ is replaced by $\varepsilon_{1,t}\theta(a)$ with $\theta(a)=\pm1$ depending on $a$: region (i) splits according to $(\theta(a),Y_b(\xi))$ into $2q$ classes. Moreover, the natural attempt to repair this by refining does not stabilize quickly: after four steps of the iteration $\Om_{j+1}=\Om_j\wedge\widehat{\widehat{\Om_j}}$ the cell numbers are $20,29,37,100$ and no Fourier-reflexive partition has been reached. Thus the construction suggested in \cite[Remark~7]{AKMO} does not extend verbatim, in general, to ingredients with non-weakly regular components. (The value $|\Omd|=29$ is explained by Lemma~\ref{lem:charsums} as indicated; the cell numbers $37$ and $100$ of the further refinements are numerical observations.)
\end{example}

\section{Concluding remarks}

Theorem~\ref{thm:main} gives a uniform sufficient condition, vectorial dual-bentness of the ingredients, of inversion type for those on $z\ne0$, under which the generalized Maiorana--McFarland function \eqref{eq:H} induces association schemes at every level $s\mid k$, and Theorems~\ref{thm:K0} and \ref{thm:fusion} describe fusions along subspaces and (when all ingredients are of inversion type) across levels. Several questions remain.
\begin{enumerate}
\item Example~\ref{ex:PS} shows that Fourier-reflexivity of $\Om_k(H)$ does not force $H$ to be vectorial dual-bent: what matters is the inversion type of the ingredients on $z\ne0$. Whether this inversion condition is necessary for the Fourier-reflexivity of $\Om_k(H)$ is open; in the two counterexamples at level $k$ (Examples~\ref{ex:PS}, \ref{ex:coset}) it fails together with reflexivity. For the preimage set partition of Maiorana--McFarland functions $\Tr^m_1(x\phi(y))$ the necessity of vectorial dual-bentness has been established in \cite{AKMO3}.
\item Besides the fusions of Theorems~\ref{thm:K0} and \ref{thm:fusion}, one may fuse along subgroups of $\F_r^*$ (squares and nonsquares, as in \cite{WWF,AKMO,AK}). A description of all fusion schemes, and the determination of which of the resulting schemes are isomorphic (e.g.\ via $p$-ranks of adjacency matrices, as in \cite{AKMO2}), is open.
\item The ingredients in Example~\ref{ex:ingredients} are quadratic or Maiorana--McFarland. Other vectorial dual-bent functions of inversion type would give further examples; the partial spread functions are not of this kind. For $l\ne2$, the condition $\sigma(t)=t^{1-d}$ of Remark~\ref{rem:main}(f) should relate to the $\tilde l$-form conditions of \cite{AKMO,AK} as in Proposition~\ref{prop:s1}; we have not pursued this.
\item Similar $\F_q$-valued refinements should exist for the vectorial semi-direct sum construction of \cite{CMP20}. For the vectorial generalized Rothaus construction of \cite{CM24} the components are in general not dual-bent, and the present method, which relies on the vectorial dual $K=H^*$, does not apply; whether non-dual-bent (vectorial) bent functions can give rise to association schemes at all is, to our knowledge, open.
\end{enumerate}


\begin{thebibliography}{99}

\bibitem{AK} N.~Anbar, T.~Kalayc\i, Further results on association schemes from non-weakly regular bent functions, Cryptogr.\ Commun.\ (2026), https://doi.org/10.1007/s12095-026-00909-8.

\bibitem{AKM} N.~Anbar, T.~Kalayc\i, W.~Meidl, Amorphic association schemes from bent partitions, Discrete Math.\ 347 (2024), 113658.

\bibitem{AKMO} N.~Anbar, T.~Kalayc\i, W.~Meidl, F.~\"Ozbudak, $(2p+1)$-class association schemes from the generalized Maiorana--McFarland class, Finite Fields Appl.\ 103 (2025), 102568.

\bibitem{AKMO2} N.~Anbar, T.~Kalayc\i, W.~Meidl, F.~\"Ozbudak, Bent partitions and Maiorana--McFarland association schemes, Cryptogr.\ Commun.\ 17 (2025), 1641--1657.

\bibitem{AKMO3} N.~Anbar, T.~Kalayc\i, W.~Meidl, F.~\"Ozbudak, Vectorial dual-bent functions and association schemes, Des.\ Codes Cryptogr.\ (2026), Paper No.\ 205, https://doi.org/10.1007/s10623-026-01946-3.

\bibitem{BCN} A.~E.~Brouwer, A.~M.~Cohen, A.~Neumaier, Distance-Regular Graphs, Springer, Berlin, 1989.

\bibitem{CMP13} A.~\c{C}e\c{s}melio\u{g}lu, W.~Meidl, A.~Pott, On the dual of (non)-weakly regular bent functions and self-dual bent functions, Adv.\ Math.\ Commun.\ 7 (2013), 425--440.

\bibitem{CMP18} A.~\c{C}e\c{s}melio\u{g}lu, W.~Meidl, Bent and vectorial bent functions, partial difference sets, and strongly regular graphs, Adv.\ Math.\ Commun.\ 12 (2018), 691--705.

\bibitem{CMP20} A.~\c{C}e\c{s}melio\u{g}lu, W.~Meidl, A.~Pott, Vectorial bent functions in odd characteristic and their components, Cryptogr.\ Commun.\ 12 (2020), 899--912.

\bibitem{CM24} A.~\c{C}e\c{s}melio\u{g}lu, W.~Meidl, Vectorial bent functions with non-weakly regular components, IEEE Trans.\ Inform.\ Theory 70 (2024), 9214--9226.

\bibitem{GL} H.~Gluesing-Luerssen, Fourier-reflexive partitions and MacWilliams identities for additive codes, Des.\ Codes Cryptogr.\ 75 (2015), 543--563.

\bibitem{HK} T.~Helleseth, A.~Kholosha, Monomial and quadratic bent functions over the finite fields of odd characteristic, IEEE Trans.\ Inform.\ Theory 52 (2006), 2018--2032.

\bibitem{KSW} P.~V.~Kumar, R.~A.~Scholtz, L.~R.~Welch, Generalized bent functions and their properties, J.\ Combin.\ Theory Ser.\ A 40 (1985), 90--107.

\bibitem{OP22} F.~\"Ozbudak, R.~M.~Pelen, Imprimitive symmetric association schemes of classes 5 and 6 arising from ternary non-weakly regular bent functions, J.\ Algebraic Combin.\ 56 (2022), 635--658.

\bibitem{PTFL} A.~Pott, Y.~Tan, T.~Feng, S.~Ling, Association schemes arising from bent functions, Des.\ Codes Cryptogr.\ 59 (2011), 319--331.

\bibitem{TPF} Y.~Tan, A.~Pott, T.~Feng, Strongly regular graphs associated with ternary bent functions, J.\ Combin.\ Theory Ser.\ A 117 (2010), 668--682.

\bibitem{WF24} J.~Wang, F.-W.~Fu, New results on vectorial dual-bent functions and partial difference sets, Des.\ Codes Cryptogr.\ 91 (2023), 127--149.

\bibitem{WFWY} J.~Wang, F.-W.~Fu, Y.~Wei, J.~Yang, A further study of vectorial dual-bent functions, IEEE Trans.\ Inform.\ Theory 70 (2024), 7472--7483.

\bibitem{WWF} Y.~Wei, J.~Wang, F.-W.~Fu, Association schemes arising from non-weakly regular bent functions, Des.\ Codes Cryptogr.\ (2024), https://doi.org/10.1007/s10623-024-01495-7.

\bibitem{WHL} Y.~Wu, J.~Y.~Hyun, Y.~Lee, Characterization of $p$-ary functions in terms of association schemes and its applications, J.\ Combin.\ Theory Ser.\ A 187 (2022), 105576.

\end{thebibliography}
\end{document}